\numberwithin{equation}{section}
\newtheorem{lem}{Lemma}[section]
\newtheorem{thm}{Theorem}[section]
\theoremstyle{remark}
\newtheorem{rmk}{Remark}[section]
\newtheorem{assum}{Assumption}
\newcommand{\nn}{\nonumber}
\newcommand{\R}{{\mathbb R}}
\renewcommand{\hat}{\widehat}
\newcommand{\mc}[1]{\mathcal{#1}}
\newcommand{\EE}{\mathbb{E}}
\newcommand{\RR}{\mathbb{R}}
\newcommand{\NN}{\mathbb{N}}
\newcommand{\PP}{\mathbb{P}}
\newcommand{\VG}{V^\gamma}
\newcommand{\XG}{X^\gamma}
\newcommand{\fm}{f^\gamma}
\newcommand{\rom}{\rho^\gamma}
\newcommand{\rd}{\mathrm{d}}
\newcommand{\norm}[1]{\left\lVert#1 \, \right\rVert}
\author{Hui Huang}
\address{Department of Mathematics and Statistics, University of Calgary, Calgary, Canada}
\email{hui.huang1@ucalgary.ca}
\date{\today}
\thanks{H. H. is partially supported by the Pacific Institute for the Mathematical Sciences (PIMS) postdoc fellowship.}
\begin{document}
\title[Overdamped limit]{Quantitative estimate of the overdamped limit for the Vlasov--Fokker--Planck systems}
\maketitle
\begin{abstract}This note adapts a probabilistic approach to establish a quantified estimate of the overdamped limit for the Vlasov--Fokker--Planck equation towards the aggregation-diffusion equation, which in particular includes cases of the Newtonian type singular forces.  The proofs are based on the investigation of  the weak convergence of the corresponding stochastic differential equations (SDEs) of Mckean type in the continuous path space. We show that one can obtain the same convergence rate as in \cite{choi2020quantified} under the same assumptions.
\end{abstract}
{\small {\bf Keywords:} Overdamped, large friction, zero inertia, tightness.}

\section{Introduction}
The present note is concerning with the following kinetic Vlasov-Fokker-Planck (VFP) equation
\begin{equation}\label{VFPeq}
\partial_{t} \fm_t+\gamma v \cdot \nabla_{x} \fm_t+\gamma\nabla_v\cdot(F(x,\rom_t)\fm_t)=\gamma^2\nabla_v\cdot(\nabla_v\fm_t+v\fm_t),\quad \fm|_{t=0}=f_0 \,,
\end{equation}
in dimension $d\geq 1$, where $(\fm_t)_{t\geq 0}\subset \mc{P}(\RR^{d}\times \RR^d)$ is a family of probability measures on $\RR^{d}\times \RR^d$ and $\gamma>0$ is the damping coefficient. Here $\rom_t=\int_{\RR^d}\fm_t(\cdot,dv)\in \mc{P}(\RR^d)$ represents the spacial distribution, namely the $x$-marginal of $\fm_t$.  In the sequel we may abuse the notations for a measure  and its Lebesgue density for simplicity.
Moreover, we assume the driving force $F$ is arising from an external potential $\Phi:\RR^d\to\RR$ and/or interaction potential $K:\RR^d\to\RR$, which is of the following from
\begin{equation}
F(x,\rho)=-\nabla\Phi(x)-(\nabla  K\ast\rho)(x)\quad \mbox{for }(x,\rho)\in\RR^d\times \mc{P}(\RR^d)\,,
\end{equation}
where 
\begin{equation}
(\nabla  K\ast\rho)(x):=\int_{\RR^d}\nabla K(x-y)\rho(dy)\,.
\end{equation}

It is well-known that the VFP equation \eqref{VFPeq} can be derived from a system of large number of particles interacting through the force field $F$, which satisfies the following system of stochastic differential equations
\begin{align}\label{particle system}
\begin{cases}
dX_t^{i,\gamma}= \gamma V_t^{i,\gamma}d t, \\
dV_t^{i,\gamma}=-\gamma\nabla\Phi(X_t^{i,\gamma})dt-\frac{\gamma}{N}\sum_{j\neq i}\nabla K(X_t^{i,\gamma}-X_t^{j,\gamma})dt -\gamma^2 V_t^{i,\gamma}+\sqrt{2}\gamma dB_t^i,\quad i=1,\cdots,N\,,
\end{cases}
\end{align}
where $X_t^{i,\gamma},~V_t^{i,\gamma}\in \RR^d$ denote the position and velocity of the $i$-th particle at time $t$,  and $\{(B_t^i)_{t\geq0}\}_{i=1}^N$ are $N$ independent $d$-dimensional Brownian motions. Here we assume the initial data $\{(X_0^{i,\gamma},V_0^{i,\gamma})\}_{i=1}^N$ are i.i.d. with the common distribution $f_0$.  Model \eqref{VFPeq} and its microscopic counterpart \eqref{particle system} have been widely used in the investigation of complex systems that
model collective behaviour (or swarming), an area that has attracted a great deal of attention, see for instance \cite{carrillo2010asymptotic,bellomo2011modeling,ha2008particle,cucker2007emergent,motsch2014heterophilious} and references therein. Note that equation \eqref{VFPeq} also includes the classical Vlasov--Poisson--Fokker--Planck system when $\nabla K=a \frac{x}{|x|^d}$, $d\geq 3$. The case $a > 0$ corresponds, for example, to the electrostatic (repulsive) interaction of charged particles in a plasma, while the case $a < 0$ describes the attraction between massive particles subject to gravitation in astrophysics.

Under suitable assumption on $\Phi$ and $K$, as $N\to\infty$, the mean-field limit result, see for example \cite{bolley2011stochastic,huang2020mean,carrillo2019propagation,sznitman1991topics,jabin2017mean,lazarovici2017mean,fetecau2019propagation,liu2019propagation}, shall show that the particle dynamics \eqref{particle system} well approximates the following  mean-field  nonlinear Mckean process
\begin{subequations}\label{2MVeq}
	\begin{numcases}{}
	dX_t^\gamma= \gamma V_t^\gamma dt, \label{eqX}\\
	dV_t^\gamma =-\gamma^2  V_t^\gamma dt+\gamma F(X_t^\gamma,\rho_t^\gamma)dt+\sqrt{2}\gamma dB_t\,, \label{eqV}
	\end{numcases}
\end{subequations}
where  the initial data $(X_0,V_0)$ is the same as in \eqref{particle system}. Here  $\rom_t=\mbox{Law} (\XG_t)$, the $x$-marginal of  $\fm_t=\mbox{Law}(X_t^\gamma ,V_t^\gamma )$, which makes the set of equations \eqref{2MVeq} nonlinear. A direct application of It\^{o}'s formula, the law $f_t^\gamma :=f^\gamma(t,\cdot,\cdot)$ at time $t$ is a weak solution to the following  
with the initial data $\fm_0(x,v)=\mbox{Law}(X_0,V_0)$.

In this note, we are interested in the VFP equation \eqref{VFPeq} in the overdamped regime, namely in the regime where $\gamma\gg 1$. When $\gamma \to \infty$, it is expected that the kinetic equation \eqref{VFPeq}  will converge to the following so-called aggregation-diffusion equation
\begin{equation}\label{adeq}
\partial_t\rho_t+\nabla_x\cdot(\rho_t F(x,\rho_t))=\Delta_x\rho_t,\quad \rho|_{t=0}=\rho_0\,.
\end{equation}
Equation of the type \eqref{adeq} appears in various contexts, such as biological aggregations \cite{topaz2006nonlocal}, material science and granular media \cite{toscani2000one}, self-assembly of nanoparticles \cite{holm2005aggregation} and molecular dynamics simulations of matter \cite{haile1993molecular}. The most noble example is the case when $\Delta K=\delta_0$, which is corresponding to the Keller--Segel model  for chemotaxis \cite{keller1970initiation}.  Similar to \eqref{2MVeq}, we have the underlying nonlinear Mckean process satisfying
\begin{align}\label{1MVeq}
X_t=X_0+\int_0^tF(X_s,\rho_s)ds +\sqrt{2}B_t
\end{align}
with $\rho_t=\mbox{Law}(X_t)$, and it satisfies the aggregation-diffusion equation \eqref{adeq}.  Since the well-posedness of equations \eqref{VFPeq} and \eqref{adeq}, and nonlinear processes \eqref{2MVeq}  and \eqref{1MVeq}  are not the focus of the present note, we refer readers to, for instance \cite{bouchut1993existence,sznitman1991topics,bolley2011stochastic,carrillo2019aggregation,godinho2015propagation},  for more discussions on the topic of solvability.

In the absence of the interaction potential, i.e. $K=0$, the overdamped limit was first formally discussed in \cite{kramers1940brownian} by Kramers through introducing a coarse-graining map. Since then, more related results have been proven by using stochastic and asymptotic techniques \cite{freidlin2004some,hottovy2012noise}, or variational methods \cite{duong2018quantification}. In the presence of the interaction potential $K$, a variational technique was proposed in \cite{duong2017variational} without obtaining the convergence rate. Most recently the authors in \cite{choi2020quantified} obtained a quantified overdamped limit in 2-Wasserstein distance for the VFP equation with nonlocal forces. 
In the absence of diffusion, similar problems were also investigated in \cite{carrillo2020quantitative,jabin2000macroscopic,fetecau2015first} via large friction limit. Especially in \cite{carrillo2020quantitative}, the authors obtained a quantitative convergence rate, and it was extended to the case with diffusion in \cite{carrillo2021large}.  In the present note we will use a different alternative approach to obtain the quantified overdamped limit under the same assumptions as in \cite{choi2020quantified}.  Instead of looking into the PDEs \eqref{VFPeq} and \eqref{adeq} directly, we will investigate their underlying Mckean processes \eqref{2MVeq} and \eqref{1MVeq} in the continuous path space. This is less technical than the methods of PDE analysis and is more intuitive in way, and hopefully more
 accessible to non-specialists. Such method has been used in \cite{hui2021} to obtain the consensus based optimization from the particle swarm optimization with the limit of zero inertia.

\textbf{Outline of the proof.}
Let us  first solve $V_t^\gamma $ from \eqref{eqV} and obtain that
\begin{equation}
V_t^\gamma =e^{-\gamma^2 t}V_0+\gamma\int_0^te^{-\gamma^2(t-s)}F(X_s^\gamma,\rho_s^\gamma )ds+\sqrt{2}\gamma \int_0^te^{-\gamma^2(t-s)}dB_s\,,
\end{equation}
which implies that
\begin{align}\label{onlyX}
X_t^\gamma &=X_0+\gamma\int_0^tV_\tau^\gamma d\tau=X_0+\gamma\int_0^t  e^{-\gamma^2\tau}V_0d\tau
+\gamma^2\int_0^t\int_0^\tau e^{-\gamma^2(\tau-s)}F(X_s^\gamma,\rho_s^\gamma )dsd\tau\notag\\
& \quad +\sqrt{2}\gamma^2\int_0^t\int_0^\tau e^{-\gamma^2(\tau-s)}dB_s d\tau\nn\\
&=X_0+\frac{1}{\gamma}(1-e^{-\gamma^2t})V_0
+\int_0^t(1-e^{-\gamma^2(t-s)})  F(X_s^\gamma,\rho_s^\gamma ) ds+\sqrt{2}\int_0^t(1-e^{-\gamma^2(t-s)})dB_s
\,.
\end{align}
Then $\XG_t$ has the law $\rom_t$ for each $t\geq 0$. Denote by $\mc{C}([0,T];\RR^d)$ the space of all $\RR^d$-valued continuous functions on $[0,T]$ equipped with the usual uniform norm. Each continuous stochastic process $\XG$ may be seen as a $\mc{C}([0,T];\RR^d)$-valued random function and it induces a probability measure (or law, denoted by $\rom$) on $\mc{C}([0,T];\RR^d)$. We shall use the weak convergence in the space of probability measures on $\mc{C}([0,T];\RR^d)$.
We first use Aldous's tightness criteria to prove the tightness of the process $\{\XG\}_{\gamma>0}$. This means that  there exist a convergent subsequence of $\{\XG\}_{\gamma>0}$, which will be still denoted by $\{\XG\}_{\gamma>0}$, such that for some process $\widehat X$ it holds
$\XG \rightharpoonup \widehat X$  in the sense of distribution.  Next we verify that the limit process $\widehat X$ indeed satisfies \eqref{1MVeq} which is the underlying Mckean process of \eqref{adeq}. Furthermore we can obtain the quantified convergence rate by  comparing \eqref{onlyX} and \eqref{1MVeq} directly, which reads
\begin{eqnarray}
\sup_{t\in[0,T]}W_2^2(\rom_t,\rho_t)\leq \sup_{t\in[0,T]}\EE[|\XG_t-X_t|^2]\leq C\frac{1}{\gamma^2}\,.
\end{eqnarray}
 See Theorem \ref{thmlimit}.

For readers' convenience, we give a brief introduction of the Wasserstein metric in the following definition, we refer to \cite{ambrosio2008gradient} for more details.
	Let $1\leq p < \infty$ and $\mc{P}_p(\RR^{d})$ be the space of Borel probability measures on $\RR^{d}$ with finite $p$-moment. We equip this space with the Wasserstein distance 
	\begin{equation}\label{wassdis}
	W_p^{p}(\mu, \nu):=\inf\left\{\int_{\RR^d\times \RR^d} |z-\hat{z}|^{p}\ d\pi(\mu, \nu)\ \big| \ \pi \in \Pi(\mu, \nu)\right\}
	\end{equation}
	where $\Pi(\mu, \nu)$ denotes the collection of all Borel probability measures on $\RR^d\times \RR^d$ with marginals $\mu$ and $\nu$ in the first and second component respectively. The Wasserstein distance can also be expressed as
	\begin{equation}
	W_p^{p}(\mu, \nu) = \inf \left\{\mathbb{E}[|Z-\overline {Z}|^{p}]\right\}\,,
	\end{equation}
	where the infimum is taken over all joint distributions of the random variables $Z$, $\overline {Z}$ with marginals $\mu$, $\nu$ respectively.
Thanks to the Kantorovich duality, in the space $\mc{P}_1(\RR^d)$, we shall also use the following alternative representation
\begin{equation}\label{Kanto}
W_1(\mu,\nu)=\sup\left\{\big|\int_{\RR^d}\phi(x)\rd (\mu-\nu)(x)\big|:~\phi\in \mbox{Lip}(\RR^d),~ \|\phi\|_{\text {Lip }} \leq 1\right\}\,,
\end{equation}
where $\mbox{Lip}(\RR^d)$ is the space of Lipschitz continuous functions on $\RR^d$ and $\|\phi\|_{\text {Lip }}:=\sup _{x \neq y} \frac{|\phi(x)-\phi(y)|}{|x-y|}$.

Throughout this paper we assume the external potential function $\Phi$ satisfies
\begin{assum}\label{asum}
	The external potential function $0\leq\Phi\in\mbox{Lip}_{loc}(\RR^d)$
	\begin{itemize}
		\item[1.]  There exists some constant $C_{\Phi}>0$ such that
		\begin{equation}\label{Phibound}
		|\nabla\Phi(x)|\leq C_{\Phi}(1+|x|)\quad \mbox{ for all }x\in\RR^d,\quad \mbox{ and }  \norm{\nabla \Phi}_{\mbox{Lip}}\leq C_{\Phi}\,;
		\end{equation}
		\item[2.]	For any $r\in[0,\infty)$: $C_{\Phi,r}:=\sup_{x\in\RR^d}|\nabla\Phi(x)|^re^{-\Phi(x)}<\infty$\,.
	\end{itemize}
\end{assum}
Note that the above assumption allows us to consider both bounded and unbounded external potentials with at most quadratic growth at infinity, in particular the case $\Phi(x)=|x|^2/2$.

As for the interaction potential $K$, we will consider both regular and singular cases. We start with assuming $\nabla K\in L^\infty(\RR^d)\cap \mbox{Lip}(\RR^d)$ in Section 2. Then in Section 3, we assume that the potential $K\in L_{loc}^q(\RR^d)$ is singular, which in particular includes the case $|K(x)|\leq \frac{C}{|x|^\alpha}$ with $1\leq\alpha<d-2$, and the Newtonian potential case $\nabla K(x)=\pm\frac{x}{|x|^d}$. In the singular case we need higher-order regularity of solutions $\fm$. Firstly we need $\fm$ to be more than just a measure but a density function. Secondly, we require some uniform in $\gamma$ estimates of $\fm$ in weighted Sobolev space $W_{x,H}^{k,p}$, which will be collected from \cite{choi2020quantified}.

\section{Regular  interaction potential}
In this section we consider globally Lipschitz continuous and bounded interaction forces by
assuming $\nabla K\in L^\infty(\RR^d)\cap \mbox{Lip}(\RR^d)$. Denote by 
$$C_K:=\norm{\nabla K}_{L^\infty}+\norm{\nabla K}_{\mbox{Lip}}<\infty \,.$$
Then one can easily verify that
\begin{enumerate}
	\item For any $\mu\in\mc{P}(\RR^d)$, it holds
	\begin{equation}\label{infbound}
	\norm{\nabla K\ast \mu}_{L^\infty}\leq \norm{\nabla K}_{L^\infty}\leq C_K\,;
	\end{equation}
	\item For any $\mu,\nu\in\mc{P}(\RR^d)$ and any $x,y\in\RR^d$, it holds
	\begin{equation}\label{lipbound}
	|F(x,\mu)-F(y,\nu)|\leq C_F(|x-y|+W_2(\mu,\nu))
	\end{equation}
	with $C_F:=\norm{\nabla K}_{\mbox{Lip}}+\norm{\nabla \Phi}_{\mbox{Lip}}$.
\end{enumerate}

Under the above regular assumptions on $K$, standard result, see for example \cite{sznitman1991topics}, gives the well-posedness of the Mckean processes  \eqref{2MVeq} and \eqref{1MVeq}
\begin{thm}\label{thm-huang2021note}
	Let $\nabla K\in L^\infty(\RR^d)\cap \operatorname{Lip}(\RR^d)$.  For each $T>0$, there hold the following assertions.
	
	(i) If $(X_0,V_0)$ is distributed according to $f_0\in\mc{P}_2(\RR^{2d})$, then for each $\gamma>0$, the nonlinear SDE \eqref{2MVeq} admits a unique solution up to time $T$ with the initial data $(X_0,V_0)$ and it holds further that
	\begin{equation}\label{secmen}
	\sup\limits_{t\in[0,T]}\EE\left[|\XG_t|^2+|\VG_t|^2\right]\leq C\EE\left[ |X_0|^2+|V_0|^2\right] \,,
	\end{equation}
	where $C$ depends only on $C_K,C_\Phi,\gamma$ and $T$.
	
	(ii) If $X_0$ is distributed according to $\rho_0\in \mc{P}_2(\RR^{d})$, then SDE \eqref{1MVeq} admits a unique solution up to time $T$ with the initial data $X_0$ and it holds further that
	\begin{equation}\label{secmen-CBO}
	\sup\limits_{t\in[0,T]}\EE\left[|X_t|^2\right]\leq C\EE\left[ |X_0|^2 \right]\,,
	\end{equation}
	where $C$ depends only on $C_K,C_\Phi$ and $T$.
\end{thm}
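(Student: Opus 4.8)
The statement is the classical well-posedness theory for McKean--Vlasov SDEs (see \cite{sznitman1991topics}), and I would organise the proof into (a) a priori second-moment bounds, (b) existence by a Picard/Banach fixed point in path space, and (c) uniqueness by a direct Gr\"onwall estimate; I treat part (i), part (ii) being the same argument with $\gamma=1$ and without the velocity variable. The only structural inputs needed are that, under Assumption \ref{asum} with $\nabla K\in L^\infty(\RR^d)\cap\operatorname{Lip}(\RR^d)$, the map $x\mapsto F(x,\mu)$ is globally Lipschitz with constant $C_F$ uniformly in $\mu$ (this is \eqref{lipbound}) and of at most linear growth, $|F(x,\mu)|\le C_\Phi(1+|x|)+C_K$ (by \eqref{Phibound} and \eqref{infbound}). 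For the moment bound, fix $\gamma,T$ and let $(X^\gamma,V^\gamma)$ solve \eqref{2MVeq}; applying It\^o's formula to $t\mapsto|X_t^\gamma|^2+|V_t^\gamma|^2$, taking expectations (localising so the stochastic integral is a martingale), and using the linear growth of $F$ with Young's inequality on the cross term $2\gamma X_t^\gamma\cdot V_t^\gamma$ and on $2\gamma V_t^\gamma\cdot F(X_t^\gamma,\rom_t)$ yields $\tfrac{\rd}{\rd t}\EE[|X_t^\gamma|^2+|V_t^\gamma|^2]\le C(1+\EE[|X_t^\gamma|^2+|V_t^\gamma|^2])$ with $C=C(C_\Phi,C_K,d,\gamma)$, so Gr\"onwall gives \eqref{secmen}; alternatively one reads \eqref{secmen} off the Duhamel representation \eqref{onlyX} and the corresponding formula for $V_t^\gamma$ via the It\^o isometry. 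The bound \eqref{secmen-CBO} follows the same way from squaring $X_t=X_0+\int_0^tF(X_s,\rho_s)\,\rd s+\sqrt2\,B_t$.

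For existence, let $\mc C_T:=\mc C([0,T];\RR^{2d})$ and let $\mc P_{2,T}$ be the probability measures $m$ on $\mc C_T$ with $\int\sup_{t\le T}|w_t|^2\,m(\rd w)<\infty$, metrised by $\mc W_T(m,m')^2:=\inf_\pi\int\sup_{t\le T}|w_t-w_t'|^2\,\pi(\rd w,\rd w')$ over couplings $\pi$; this is a complete metric space. Given $m\in\mc P_{2,T}$ with $x$-marginals $(\mu_t)_{t\le T}$, freezing $F(\cdot,\mu_t)$ in \eqref{2MVeq} gives a \emph{linear} SDE with drift globally Lipschitz in $(\widetilde X,\widetilde V)$ and of linear growth, hence with a unique strong solution $(\widetilde X,\widetilde V)\in\mc P_{2,T}$; put $\Gamma(m):=\mathrm{Law}(\widetilde X,\widetilde V)$. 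Driving the linear SDEs for two inputs $m^1,m^2$ by the same Brownian motion and initial data (so the Brownian term cancels in the difference of the $\widetilde V$-equations), \eqref{lipbound} and $W_2^2(\mu_s^1,\mu_s^2)\le\mc W_T(m^1,m^2)^2$ give, after Gr\"onwall, $\mc W_T(\Gamma m^1,\Gamma m^2)^2\le C(\gamma,T)\int_0^T\mc W_s(m^1,m^2)^2\,\rd s$; iterating in time produces the factorial gain $(C(\gamma,T)T)^n/n!$ for $\Gamma^{(n)}$, so some iterate is a strict contraction and Banach's theorem provides a unique fixed point $m^\star$, whose $x$-marginals $(\rom_t)$ close the nonlinearity.

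For uniqueness, if $(X^i,V^i)$, $i=1,2$, solve the nonlinear system \eqref{2MVeq} with a common Brownian motion and initial data, put $\rho_t^i=\mathrm{Law}(X_t^i)$; then $W_2^2(\rho_s^1,\rho_s^2)\le\EE|X_s^1-X_s^2|^2$, so subtracting, squaring and taking expectations gives $\EE[|X_t^1-X_t^2|^2+|V_t^1-V_t^2|^2]\le C\int_0^t\EE[|X_s^1-X_s^2|^2+|V_s^1-V_s^2|^2]\,\rd s$, and Gr\"onwall forces equality; part (ii) is identical. I do not expect a genuine obstacle here --- the result is textbook --- and the only points requiring a modicum of care are the self-consistent dependence on the law (which is why existence is obtained through a path-space fixed point rather than a one-line contraction), the bookkeeping of the $\gamma$-dependence in the constants, and the degeneracy of the noise, which is harmless because the noise enters only the $\widetilde V$-equation while $F$ depends on $\fm_t$ only through its marginal $\rom_t$ and the $X$-equation is merely the time-integral of $V$.
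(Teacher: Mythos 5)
Your argument is correct and is exactly the classical Sznitman-type proof (a priori second-moment bounds via It\^o and Gr\"onwall, a path-space fixed point for the linear SDE with frozen law, and a Gr\"onwall estimate for uniqueness) that the paper does not spell out but simply invokes by citing \cite{sznitman1991topics}, so there is nothing to bridge between your route and the paper's. One cosmetic caveat: your It\^o--Gr\"onwall step (like any proof) yields $\sup_{t\in[0,T]}\EE\left[|X_t^\gamma|^2+|V_t^\gamma|^2\right]\leq C\left(1+\EE\left[|X_0|^2+|V_0|^2\right]\right)$ rather than the purely multiplicative form literally written in \eqref{secmen}, which cannot hold as stated for deterministic zero initial data because of the additive noise and the constant part of the drift; the same remark applies to \eqref{secmen-CBO}.
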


The proof of the overdamped limit will proceed in two steps:
\begin{itemize}
	\item We prove a tightness result for the sequence of probability distributions $\{\rom\}_{\gamma>0}$ of $\{\XG\}_{\gamma>0}$ by using Aldous's tightness criteria.
	\item We will check that all the limit points of $\{\XG\}_{\gamma>0}$ as $\gamma\to \infty$ satisfy the Mckean process \eqref{1MVeq} underlying the aggregation-diffusion equation \eqref{adeq}.
\end{itemize}
For the sake of completeness, let us recall a result from the Aldous criteria \cite[Theorem 4.5]{jacod2002limit}.
\begin{lem}\label{lemAldous}
Let $\{X^n\}_{n\in \NN}$ be a sequence of random variables defined on a probability space $(\Omega,\mc{F},\PP)$ and valued in $\mc{C}([0,T];\RR^d)$. The sequence of probability distributions $\{\mu_{X^n}\}_{n\in \NN}$  of $\{X^n\}_{n\in \NN}$ is tight on $\mc{C}([0,T];\RR^d)$ if the following hold.

$(Con 1)$ For all $t\geq 0$, $\{\mu_{X_t^n}\}_{n\in\NN}$ the set of distributions of $\{X_t^n\}_{n\in \NN}$  is tight in $\RR^d$.

$(Con 2)$ For all $\varepsilon>0$, $\eta>0$, there exists $\delta_0>0$ and $n_0\in\NN$ such that for all $n\geq n_0$ and for all discrete-valued $\sigma(X^n_s;s\in[0,T])$-stopping times $\beta$ such that $0\leq \beta+\delta_0\leq T$,
\begin{equation}
\sup_{\delta\in[0,\delta_0]}\PP\left(|X^n_{\beta+\delta}-X^n_{\beta}|\geq \eta\right)\leq \varepsilon\,.
\end{equation}
 \end{lem}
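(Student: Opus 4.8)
The statement is the classical Aldous tightness criterion, so the cleanest route is to invoke \cite[Theorem 4.5]{jacod2002limit} directly; for completeness let me indicate how one would reconstruct it. The plan is to pass through Prokhorov's theorem and the Arzel\`a--Ascoli characterisation of compact subsets of the Polish space $\mc{C}([0,T];\RR^d)$: a family of laws on $\mc{C}([0,T];\RR^d)$ is tight if and only if (a) the laws of the initial values $\{X^n_0\}$ are tight in $\RR^d$, and (b) for every $\eta>0$ the modulus of continuity is uniformly small in probability, i.e.
\begin{equation*}
\lim_{\delta\downarrow 0}\ \limsup_{n\to\infty}\ \PP\big(w_{X^n}(\delta)\geq \eta\big)=0,\qquad\text{where}\qquad w_f(\delta):=\sup_{\substack{s,t\in[0,T]\\ |s-t|\leq\delta}}|f(s)-f(t)|.
\end{equation*}
Property (a) is exactly $(Con\,1)$ specialised to $t=0$, so the entire content of the lemma is the implication $(Con\,2)\Rightarrow(b)$.

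For that implication, fix $\eta,\varepsilon>0$ and, for each $n$, introduce the oscillation times $\tau^n_0:=0$ and $\tau^n_{k+1}:=\inf\{t>\tau^n_k:\ |X^n_t-X^n_{\tau^n_k}|>\eta/3\}\wedge T$, which are stopping times for the natural filtration of $X^n$; since $(Con\,2)$ is stated for discrete-valued stopping times, one replaces each $\tau^n_k$ by its upward discretisation $\lceil m\tau^n_k\rceil/m$, the discretisation error being controlled by path continuity. By continuity one has $|X^n_t-X^n_{\tau^n_k}|\leq\eta/3$ on $[\tau^n_k,\tau^n_{k+1}]$, so a short triangle-inequality argument shows that any pair $s,t$ with $|s-t|\leq\delta$ and $|X^n_s-X^n_t|>\eta$ must straddle a whole interval $[\tau^n_k,\tau^n_{k+1}]$, forcing $\tau^n_{k+1}-\tau^n_k\leq\delta$; hence
\begin{equation*}
\{w_{X^n}(\delta)>\eta\}\ \subseteq\ \bigcup_{k\geq 0}\{\tau^n_{k+1}-\tau^n_k\leq\delta,\ \tau^n_{k+1}\leq T\}.
\end{equation*}
It then remains to (i) bound, uniformly in $n$, the number of these times before $T$ --- i.e.\ show $\sup_n\PP(\tau^n_N<T)\to 0$ as $N\to\infty$ --- which follows from applying $(Con\,2)$ along a fixed finite grid of $[0,T]$ of mesh $\leq\delta_0$; and (ii) for each fixed $k$ apply $(Con\,2)$ with $\beta=\tau^n_k$ to get $\PP(\tau^n_{k+1}-\tau^n_k\leq\delta)\leq\varepsilon$ once $\delta\leq\delta_0$ (after the standard upgrade from a displacement bound at the single time $\tau^n_k+\delta$ to one over the whole window $[\tau^n_k,\tau^n_k+\delta]$, obtained by a second application of $(Con\,2)$ at the first exit time of $X^n-X^n_{\tau^n_k}$ from a ball, again using path continuity). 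A union bound over $k\leq N$ combining (i) and (ii) then makes $\limsup_n\PP(w_{X^n}(\delta)\geq\eta)$ as small as desired for $\delta$ small, which is (b), and tightness follows.

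I expect the main obstacle to be precisely this passage from $(Con\,2)$ to the modulus-of-continuity estimate: one has to control, uniformly in $n$, how many $\tfrac{\eta}{3}$-oscillation stopping times can accumulate before time $T$, and to cope with the fact that $(Con\,2)$ supplies only a pointwise displacement bound at $\beta+\delta$ whereas one needs control over the full interval $[\beta,\beta+\delta]$; both points are handled by exploiting sample-path continuity and reducing to a deterministic finite grid. Everything else (Prokhorov, Arzel\`a--Ascoli, and the triangle-inequality inclusion above) is routine, which is why the paper merely cites \cite{jacod2002limit}.
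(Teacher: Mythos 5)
The paper gives no proof of this lemma at all---it is simply recalled with a direct citation to \cite[Theorem 4.5]{jacod2002limit}---so your primary move of invoking that reference is exactly the paper's approach. Your supplementary sketch (Prokhorov/Arzel\`a--Ascoli reduction to the modulus of continuity, oscillation stopping times, and the upgrade of $(Con\,2)$ from a single-time displacement bound to interval control) is a reasonable outline of the standard Aldous argument and goes beyond what the paper itself supplies.
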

\begin{thm}[Tightness]\label{thmtight}
For any $\gamma>0$ and $T>0$,	let $(\XG_t,\VG_t)_{t\in[0,T]}$ satisfy the system \eqref{2MVeq} up to time $T$ with $\nabla K\in L^\infty(\RR^d)\cap \operatorname{Lip}(\RR^d)$,  $\Psi$ satisfying Assumption \ref{asum},  and the initial data $\operatorname{Law}(X_0,V_0)=f_0\in \mc{P}_2(\RR^{2d})$. Then for each countable subsequence $\{\gamma_k\}_{k\in \NN}$ with $\lim_{k\rightarrow \infty} \gamma_k=\infty$, the sequence of probability distributions $\{\rho^{\gamma_k}\}_{k\in\NN}$ of $\{X^{\gamma_k}\}_{k\in \NN}$  is tight on $\mc{C}([0,T];\RR^d)$.
\end{thm}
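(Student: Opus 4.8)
The plan is to verify conditions $(Con 1)$ and $(Con 2)$ of Lemma~\ref{lemAldous} for the sequence $\{X^{\gamma_k}\}_{k\in\NN}$. Since $\gamma_k\to\infty$, there is no loss of generality in assuming $\gamma_k\geq1$ for all $k$, so that $1/\gamma_k\leq1$; below I write $\gamma$ for a generic element of the sequence and only record whether bounds are uniform in $\gamma\geq1$. All the processes involved are well defined and the SDEs uniquely solvable by Theorem~\ref{thm-huang2021note}. The very first task is to replace the $\gamma$-dependent moment bound \eqref{secmen} by a uniform one: from the representation \eqref{onlyX}, using $|1-e^{-\gamma^2(t-s)}|\leq1$, the linear growth $|F(x,\rho)|\leq C_\Phi(1+|x|)+C_K$ (from \eqref{Phibound} and \eqref{infbound}), the identity $\sqrt2\int_0^t(1-e^{-\gamma^2(t-s)})\,dB_s=\sqrt2(B_t-Y_t)$ with $Y_t:=\int_0^t e^{-\gamma^2(t-s)}\,dB_s$, the pathwise bound $|Y_t|\leq2\sup_{s\leq T}|B_s|$, and Gr\"onwall's inequality, one obtains
\[
\sup_{\gamma\geq1}\ \EE\Big[\sup_{t\in[0,T]}|\XG_t|^2\Big]\ \leq\ M_T\ <\ \infty .
\]
With this in hand $(Con 1)$ is immediate: for fixed $t$, Chebyshev's inequality gives $\PP(|\XG_t|>R)\leq M_T/R^2$ uniformly in $\gamma\geq1$, hence $\{\mu_{\XG_t}\}_\gamma$ is tight in $\RR^d$.

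For $(Con 2)$ I would first rewrite the dynamics in a form that isolates the fast variable. Integrating \eqref{eqV} and inserting into \eqref{eqX} gives the exact identity
\[
\XG_t \;=\; X_0+\tfrac1\gamma V_0-\tfrac1\gamma\VG_t+\int_0^t F(\XG_\tau,\rom_\tau)\,d\tau+\sqrt2\,B_t ,
\]
so that for a discrete-valued $\sigma(\XG_s;s\leq T)$-stopping time $\beta$ with $\beta+\delta\leq T$,
\[
\XG_{\beta+\delta}-\XG_\beta \;=\; -\tfrac1\gamma\big(\VG_{\beta+\delta}-\VG_\beta\big)+\int_\beta^{\beta+\delta}F(\XG_\tau,\rom_\tau)\,d\tau+\sqrt2\,(B_{\beta+\delta}-B_\beta) .
\]
The drift increment is bounded by $\delta\sup_{\tau\leq T}|F(\XG_\tau,\rom_\tau)|$, whose $L^2$-norm is $\leq C_T\,\delta$ by the previous step and the linear growth of $F$. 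For the Brownian increment: $\beta$ is also a stopping time for the augmented Brownian filtration, so by the strong Markov property of Brownian motion $B_{\beta+\delta}-B_\beta$ is centred Gaussian with $\EE|B_{\beta+\delta}-B_\beta|^2=d\delta$. Both contributions are therefore made as small as desired by choosing $\delta_0$ small, uniformly in $\gamma$ and in $\beta$.

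The delicate term is the velocity increment $\tfrac1\gamma(\VG_{\beta+\delta}-\VG_\beta)$, for which a crude second-moment bound on $\VG_t$ is useless since it only yields $\tfrac1\gamma\sup_t|\VG_t|=O(1)$. Instead I would use $\tfrac1\gamma\VG_t=\tfrac1\gamma e^{-\gamma^2 t}V_0+\int_0^t e^{-\gamma^2(t-s)}F\,ds+\sqrt2\,Y_t$, so that
\[
\tfrac1\gamma\big|\VG_{\beta+\delta}-\VG_\beta\big|\ \leq\ \tfrac2\gamma\sup_{t\leq T}|\VG_t|\ \leq\ \tfrac{2|V_0|}{\gamma}+\tfrac{2}{\gamma^2}\sup_{\tau\leq T}|F(\XG_\tau,\rom_\tau)|+2\sqrt2\sup_{t\leq T}|Y_t| .
\]
The first two summands are $O(1/\gamma)$ in $L^2$ (using again Step~1 for the middle one). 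The crux is that $\EE\big[\sup_{t\leq T}|Y_t|^2\big]\to0$ as $\gamma\to\infty$: writing $Y_t=\int_0^t\gamma^2 e^{-\gamma^2(t-s)}(B_t-B_s)\,ds+e^{-\gamma^2 t}B_t$ and using the a.s.\ $\theta$-H\"older continuity of Brownian paths for some $\theta\in(0,\tfrac12)$, one gets $\sup_{t\leq T}|Y_t|\leq C_\theta H_\theta\,\gamma^{-2\theta}$ where $H_\theta$ is the (finite-moment) H\"older seminorm of $B$ on $[0,T]$; combined with the $\gamma$-independent domination $\sup_{t\leq T}|Y_t|\leq2\sup_{s\leq T}|B_s|\in L^2$, dominated convergence yields the claim. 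Consequently $\EE\big[(\tfrac1\gamma|\VG_{\beta+\delta}-\VG_\beta|)^2\big]=:\omega(\gamma)\to0$, uniformly in $\beta$ and $\delta$. Given $\varepsilon,\eta>0$, one then chooses $k_0$ so that $\omega(\gamma_k)$ is small for $k\geq k_0$, and $\delta_0$ so that the drift and Brownian contributions are small for $\delta\leq\delta_0$; a union bound together with Chebyshev's inequality gives $\sup_{\delta\in[0,\delta_0]}\PP(|X^{\gamma_k}_{\beta+\delta}-X^{\gamma_k}_\beta|\geq\eta)\leq\varepsilon$ for all such $\beta$ and all $k\geq k_0$, which is exactly $(Con 2)$. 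Lemma~\ref{lemAldous} then yields tightness of $\{\rho^{\gamma_k}\}_{k\in\NN}$ on $\mc{C}([0,T];\RR^d)$.

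I expect the main obstacle to be precisely the uniform smallness of the velocity increment: one genuinely needs both the algebraic rearrangement above (so that the prefactor $\gamma$ multiplying the stochastic convolution cancels the $1/\gamma$) and the pathwise decay $\sup_{t\leq T}|Y_t|\to0$ of the Ornstein--Uhlenbeck-type integral $Y_t=\int_0^t e^{-\gamma^2(t-s)}\,dB_s$, which is where the H\"older regularity of Brownian motion enters; everything else ($(Con 1)$, the drift increment, the Brownian increment, and the Gr\"onwall moment estimate) is routine.
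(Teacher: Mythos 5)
Your proposal is correct, and while your Step 1 (the uniform-in-$\gamma$ moment bound plus Chebyshev for $(Con\,1)$) is essentially the paper's argument, your verification of $(Con\,2)$ takes a genuinely different route. The paper keeps the Duhamel representation \eqref{onlyX} for the increment $X^\gamma_{\beta+\delta}-X^\gamma_\beta$ and must then control the stochastic convolution term $\bigl(e^{-\gamma^2\beta}-e^{-\gamma^2(\beta+\delta)}\bigr)\int_0^\beta e^{\gamma^2 s}\,dB_s$ evaluated at the stopping time; since the prefactor cannot be pulled inside the It\^o integral, this is handled there via an auxiliary martingale $M^{\gamma,\delta}$, Doob's maximal inequality and an It\^o/integration-by-parts identity, producing the modulus $\EE\bigl[|X^\gamma_{\beta+\delta}-X^\gamma_\beta|^2\bigr]\le C(\delta+\delta^{1/2})$ uniformly over all $\gamma\ge 1$. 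You instead use the exact identity $X^\gamma_t=X_0+\tfrac1\gamma(V_0-V^\gamma_t)+\int_0^t F(X^\gamma_s,\rho^\gamma_s)\,ds+\sqrt2\,B_t$ obtained directly from \eqref{eqX}--\eqref{eqV}, so the drift and Brownian increments are small in $\delta$ by elementary means (It\^o isometry at the stopping time), and the only $\gamma$-sensitive term, $\tfrac1\gamma(V^\gamma_{\beta+\delta}-V^\gamma_\beta)$, is shown to vanish in $L^2$ as $\gamma\to\infty$ uniformly in time and in $\beta,\delta$, via the pathwise H\"older estimate $\sup_{t\le T}\bigl|\int_0^t e^{-\gamma^2(t-s)}\,dB_s\bigr|\le C_\theta H_\theta\,\gamma^{-2\theta}$; this legitimately exploits the ``$n\ge n_0$'' allowance in Lemma \ref{lemAldous} (the finitely many remaining laws being individually tight on the Polish space $\mc{C}([0,T];\RR^d)$). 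The trade-off: the paper's estimate is a genuine equicontinuity modulus in $\delta$, uniform over every $\gamma\ge1$, and uses only second-moment/martingale tools, whereas your decomposition entirely sidesteps the non-anticipativity difficulty with the stochastic convolution at a stopping time (the very point the paper flags), is more elementary in that respect, and the intermediate fact $\EE\bigl[\sup_{t\le T}|Y_t|^2\bigr]\to0$ transparently exhibits the overdamped mechanism, at the price of invoking the finite-moment H\"older seminorm of Brownian motion and of yielding smallness only along $\gamma\to\infty$ rather than uniformly in $\gamma$.
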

\begin{proof}
	We apply the Aldous criteria in Lemma \ref{lemAldous} to the system $\{\XG\}_{\gamma>0}$ by verifying conditions $(Con 1)$ and $(Con 2)$.
	
	$\bullet$ \textit{Step 1: Checking $(Con 1)$. }  
	Let us recall \eqref{onlyX} and use Fubini's theorem (see \cite[Theorem 4.33]{da2014stochastic} for the stochastic version), then we have
		\begin{align}\label{Fubini}
	X_t^\gamma&=X_0+\gamma\int_0^t  e^{-\gamma^2\tau}V_0d\tau
	+\gamma^2\int_0^t\int_0^\tau e^{-\gamma^2(\tau-s)} F(X_s^\gamma,\rho_s^\gamma ) dsd\tau +\sqrt{2}\gamma^2\int_0^t\int_0^\tau e^{-\gamma^2(\tau-s)}dB_s d\tau \notag\\
	&=X_0+\gamma\int_0^t  e^{-\gamma^2\tau}V_0d\tau
	+\gamma^2\int_0^t\int_s^t e^{-\gamma^2(\tau-s)}d\tau  F(X_s^\gamma,\rho_s^\gamma ) ds+\sqrt{2}\gamma^2\int_0^t\int_s^t e^{-\gamma^2(\tau-s)}d\tau  dB_s \notag\\
	&=X_0+\frac{1}{\gamma}(1-e^{-\gamma^2t})V_0
	+\int_0^t(1-e^{-\gamma^2(t-s)})  F(X_s^\gamma,\rho_s^\gamma ) ds+\sqrt{2}\int_0^t(1-e^{-\gamma^2(t-s)})dB_s\,.
	\end{align}
	For $\gamma\geq 1$, it follows from H\"{o}lder's inequality that
	\begin{equation*}
	|X_t^\gamma|^2\leq 4|X_0|^2+4|V_0|^2+2T\int_0^t|F(X_s^\gamma,\rho_s^\gamma )|^2ds+8\left|\int_0^{t}(1-e^{-\gamma^2(t-s)})  dB_s\right|^2\,.
	\end{equation*}
	Here we have used the fact that for any sequence $\{a_i\}_{i=1}^n\geq 0$, one has
	\begin{equation}
	(\sum_{i=1}^{n}a_i)^2\leq n\sum_{i=1}^{n}a_i^2\,.
	\end{equation}
Using It\^{o}'s isometry  yields that
	\begin{align*}
	&\EE\left[\left|\int_0^{t}(1-e^{-\gamma^2(t-s)}) dB_s\right|^2\right]=\EE\left[\int_0^{t}|(1-e^{-\gamma^2(t-s)})|^2ds\right]\leq T
	\,.
	\end{align*}
Thus we have 
	\begin{align}
	\EE[|X_t^\gamma|^2]\leq 4\EE[|X_0|^2]+4\EE[|V_0|^2]+2T\int_0^t\EE[|F(X_s^\gamma,\rho_s^\gamma )|^2]ds+8T\,.
	\end{align}
It follows from \eqref{Phibound} and \eqref{infbound} that
\begin{align}\label{EFbound}
\EE[|F(X_s^\gamma,\rho_s^\gamma )|^2]&=\EE[|\nabla\Phi(X_s^\gamma)+\nabla K\ast \rom_s(X_s^\gamma)|^2]   \leq 2\EE[|\nabla\Phi(X_s^\gamma)|^2]+2C_K^2\notag\\
&\leq 2\EE[|C_\Phi(1+|X_s^\gamma|)|^2]+2C_K^2\leq 4C_\Phi^2+2C_K^2+4C_\Phi^2\EE[|X_s^\gamma|^2]
 \,,
\end{align}	
which leads to 
\begin{align}
\EE[|X_t^\gamma|^2]\leq 4\EE[|X_0|^2]+4\EE[|V_0|^2]+8C_\Phi^2T\int_0^t\EE[|X_s^\gamma|^2]ds+2T(4C_\Phi^2+2C_K^2)+8T\,.
\end{align}
Using Gronwall's inequality leads to
\begin{equation}\label{esOX2}
\EE[|X_t^\gamma|^2]
\leq \left(4\EE[|X_0|^2]+4\EE[|V_0|^2]+2T(4C_\Phi^2+2C_K^2)+8T\right)\exp\left(8C_\Phi^2T^2\right),\quad t\in[0,T]\,.
\end{equation}
This  implies that
\begin{equation} \label{uniform-bd}
\sup_{t\in[0,T]}\EE[|X_t^\gamma|^2] \leq C(\EE[|X_0|^2],\EE[|V_0|^2],C_\Phi,C_K,T)=:C_1
\end{equation}
where $C_1>0$ is a  constant independent of $\gamma$. So for any $\varepsilon>0$, there exists a compact subset $K_\varepsilon:=\{x:~|x|^2\leq \frac{C_1}{\varepsilon}\}$ such that by Markov's inequality
\begin{equation}
\rho_t^\gamma((K_\varepsilon)^c)=\PP(|X_t^\gamma|^2> \frac{C_1}{\varepsilon})\leq \frac{\varepsilon\EE[|X_t^\gamma|^2]}{C_1}\leq\varepsilon,\quad \forall ~\gamma\geq1\,.
\end{equation}
This means that for all $t\in[0,T]$, each countable subset of $\{\rho_t^\gamma\}_{\gamma\geq1}$ is tight in $\RR^d$, which verifies  condition $(Con 1)$ in Lemma \ref{lemAldous}.

$\bullet$ \textit{Step 2: Checking $(Con 2)$. }   Let $\beta$ be a $\sigma(X^\gamma_s;s\in[0,T])$-stopping time with discrete values such that $\beta+\delta_0\leq T$. 
Let us recall \eqref{onlyX} and compute
{\small 	\begin{align}\label{diff}
&X_{\beta+\delta}^\gamma-X_{\beta}^\gamma=\int_\beta^{\beta+\delta}V_\tau d\tau\notag\\
=&\gamma\int_\beta^{\beta+\delta}  e^{-\gamma^2\tau}V_0d\tau
+\gamma^2\int_\beta^{\beta+\delta}\int_0^\tau e^{-\gamma^2(\tau-s)} F(X_s^\gamma,\rho_s^\gamma ) dsd\tau +\sqrt{2}\gamma^2\int_\beta^{\beta+\delta}\int_0^\tau e^{-\gamma^2(\tau-s)}dB_s d\tau \notag\\
=&\gamma\int_\beta^{\beta+\delta}  e^{-\gamma^2\tau}V_0d\tau
+\gamma^2\int_0^\beta\int_\beta^{\beta+\delta} e^{-\gamma^2(\tau-s)}d\tau  F(X_s^\gamma,\rho_s^\gamma ) ds+\gamma^2\int_\beta^{\beta+\delta}\int_s^{\beta+\delta} e^{-\gamma^2(\tau-s)}d\tau  F(X_s^\gamma,\rho_s^\gamma ) ds\notag\\
& \quad +\sqrt{2}\gamma^2\int_0^\beta\int_\beta^{\beta+\delta} e^{-\gamma^2(\tau-s)}d\tau  dB_s +\sqrt{2}\gamma^2\int_\beta^{\beta+\delta}\int_s^{\beta+\delta} e^{-\gamma^2(\tau-s)}d\tau dB_s \notag\\
=&\frac{1}{\gamma}(e^{-\gamma^2\beta}-e^{-\gamma^2(\beta+\delta)})V_0 
+\int_0^\beta (e^{-\gamma^2(\beta-s)}-e^{-\gamma^2(\beta+\delta-s)})  F(X_s^\gamma,\rho_s^\gamma ) ds
+\int_\beta^{\beta+\delta} (1-e^{-\gamma^2(\beta+\delta-s)})  F(X_s^\gamma,\rho_s^\gamma ) ds\notag\\
& \quad +\sqrt{2}(e^{-\gamma^2\beta}-e^{-\gamma^2(\beta+\delta)})\int_0^\beta e^{\gamma^2s} dB_s+\sqrt{2}\int_\beta^{\beta+\delta} (1-e^{-\gamma^2(\beta+\delta-s)})dB_s\,.
\end{align}}
Note here that the multiplier $(e^{-\gamma^2\beta}-e^{-\gamma^2(\beta+\delta)})$ cannot enter the stochastic integral due to the non-anticipativity required for It\^{o} integrals and associated moment estimates.

Notice that it holds $|e^{-x}-e^{-y}|\leq |x-y|\wedge 1 $ for all $x,y\in[0,\infty)$ and $\xi\in[0,1]$. Then it is easy to compute that for each $q\geq 1$, $\xi\in[0,1]$ and $\tau\in[0,T]$,
\begin{align}\label{est-11} 
&\int_0^{\tau}
\left| e^{-\gamma^2(\tau-s)} - e^{-\gamma^2(\tau+\delta-s)}   \right|^q \,ds
\leq
\int_0^{\tau}
\left( e^{-\gamma^2(\tau-s)} - e^{-\gamma^2(\tau+\delta-s)}  \right) \,ds\nn\\
=&
\frac{1}{\gamma^2} \left( 1- e^{-\gamma^2\delta}  \right)
-\frac{1}{\gamma^2} \left( e^{-\gamma^2\tau} -e^{-\gamma^2(\tau+\delta)} \right)
\leq \frac{1}{\gamma^2} \cdot (\gamma^2\delta)^\xi=(\frac{1}{\gamma^2})^{1-\xi}\delta^\xi,
\end{align}
and in particular it holds
$$
\int_{\beta}^{\beta+\delta} \left(1-e^{-\gamma^2(\beta+\delta-s)}\right)^q ds \leq \int_{\beta}^{\beta+\delta} 1 \,ds =\delta.
$$

It is easy to see that
\begin{align}
\EE\left[\frac{1}{\gamma^2}|(e^{-\gamma^2\beta}-e^{-\gamma^2(\beta+\delta)})V_0|^2\right]\leq\frac{1}{\gamma^2} \gamma^2\delta\EE[|e^{-\gamma^2\beta}-e^{-\gamma^2(\beta+\delta)}||V_0|^2]\leq \delta \EE[|X_0|^2] \,.
\end{align}
Moreover, we notice that
\begin{align}
&\EE\left[|\int_0^\beta (e^{-\gamma^2(\beta-s)}-e^{-\gamma^2(\beta+\delta-s)})  F(X_s^\gamma,\rho_s^\gamma ) ds|^2\right]\notag\\
\leq &  \EE\left[\int_0^\beta |e^{-\gamma^2(\beta-s)}-e^{-\gamma^2(\beta+\delta-s)}|^2ds\int_0^\beta |F(X_s^\gamma,\rho_s^\gamma ) |^2ds\right]
\leq \delta \int_0^T  \EE[|F(X_s^\gamma,\rho_s^\gamma ) |^2]ds\notag\,,
\end{align}
and 
\begin{align*}
&\EE\left[|\int_\beta^{\beta+\delta} (1-e^{-\gamma^2(\beta+\delta-s)})  F(X_s^\gamma,\rho_s^\gamma ) ds|^2\right]
\leq \delta \EE\left[\int_\beta^{\beta+\delta} |F(X_s^\gamma,\rho_s^\gamma ) |^2ds\right]\leq \delta\int_0^{T}\EE\left[ |F(X_s^\gamma,\rho_s^\gamma ) |^2\right]ds\,.
\end{align*}
Applying It\^{o}'s isometry  one has
\begin{align}
&\EE\left[|\int_\beta^{\beta+\delta} (1-e^{-\gamma^2(\beta+\delta-s)}) dB_s|^2\right]
= \EE\left[\int_\beta^{\beta+\delta} |1-e^{-\gamma^2(\beta+\delta-s)}|^2ds\right]\leq \delta\,.
\end{align}

Particularly, let us look at 
$$
Z^{\gamma,\delta}_{t}:
= ( e^{-\gamma^2t} -e^{-\gamma^2(t+\delta)} ) 
\int_0^{t} e^{\gamma^2s}dB_s, \quad t\in [0,T],
$$
and try to derive an estimate on $Z^{\gamma,\delta}_{\beta}$. Basic calculations as above yield that
\begin{align}\label{est-111}
\EE\left[
\int_0^T \Big|  Z^{\gamma,\delta}_{t} \Big|^2 dt
\right]
&
=
\EE\left[
\int_0^T \Big|   
\int_0^{t}  
( e^{-\gamma^2t} -e^{-\gamma^2(t+\delta)} ) 
e^{\gamma^2s}dB_s \Big|^2 dt
\right]
\nonumber\\
&=\int_0^T\int_0^t |( e^{-\gamma^2t} -e^{-\gamma^2(t+\delta)} ) 
e^{\gamma^2s}|^2dsdt\leq C\delta
\end{align}
where we have used  It\^{o}'s isometry and the estimate \eqref{est-11} with $\zeta=1$ and the constant $C$ is independent of $\gamma$ and $\delta$. Thus, the process
$$
M^{\gamma,\delta}_t:= \int_0^t  (Z^{\gamma,\delta}_s)' dB_s, \quad t\in[0,T],
$$
is a integrable continuous martingale; indeed, Doob's martingale inequality gives
\begin{align}
\EE\left[ \max_{t\in[0,T]} \left| M^{\gamma,\delta}_t \right|\right]
\leq C\EE\left[ \int_0^T  \left| (Z^{\gamma,\delta}_s)'  \right|ds \right]
\leq 
C \left(
\EE\left[ \int_0^T  \left|  Z^{\gamma,\delta}_s \right|^2 ds \right] 
 \right)^{1/2} \leq
C  \delta^{1/2}, \label{est-max}
\end{align}
with $C$ being independent of $\gamma$ and $\delta$. On the other hand, it is easy to see that $Z^{\gamma,\delta}$ satisfies the following SDE
$$
dZ^{\gamma,\delta}_t= -\gamma^2Z^{\gamma,\delta}_tdt + (1-e^{-\gamma^2\delta}) \,dB_t, \quad t>0;\quad Z^{\gamma,\delta}_0=0.
$$
By It\^o-Doeblin formula, it holds that
for all $t\in[0,T]$,
\begin{align}
|Z^{\gamma,\delta}_t|^2
&=\int_0^t \Big| ( e^{-\gamma^2 (t-s)} -e^{-\gamma^2 (t+\delta-s)} ) \Big|^2ds 
+ 2 \int_0^t e^{-2\gamma^2 (t-s)}(  1 -e^{-\gamma^2 \delta}) d M^{\gamma,\delta}_s
\nonumber\\
&\leq \delta
+2 (  1 -e^{-\gamma^2 \delta}) M^{\gamma,\delta}_t 
- 4 \int_0^t \gamma^2e^{-2\gamma^2 (t-s)}(  1 -e^{-\gamma^2 \delta})  M^{\gamma,\delta}_s \,ds
\nonumber\\
&\leq
\delta
+2\left| M^{\gamma,\delta}_t \right|
+ 4\gamma^2  \max_{s\in[0,T]} \left| M^{\gamma,\delta}_s \right| 
\int_0^t e^{-2\gamma^2 (t-s)}  \,ds
\leq  \delta
+4 \max_{s\in[0,T]} \left| M^{\gamma,\delta}_s \right|  , \quad\text{a.s.,}\label{est-222}
\end{align}
where the integration by parts formula is applied to the stochastic integral in the first line and in the second inequality, we used estimate \eqref{est-11} with $\zeta$ equal to $ 1$. Combined with \eqref{est-max}, it yields that
\begin{align*}
\EE\left[ \Big|Z^{\gamma,\delta}_{\beta} \Big|^2\right]
\leq  \EE\left[ \max_{t\in[0,T]}\Big|Z^{\gamma,\delta}_{t} \Big|^2\right]
\leq C (\delta^{1/2}+\delta),
\end{align*}
where the constant $C$ is independent of $\beta,\gamma,$ and $\delta$.

Thus we have
\begin{align}
&\EE[|\XG_{\beta+\delta}-\XG_{\beta}|^2] 
\leq 5\delta \EE[|X_0|^2]+10\delta \int_0^T  \EE[|F(X_s^\gamma,\rho_s^\gamma ) |^2]ds +10\delta+10C (\delta^{1/2}+\delta)\notag\\
\leq&5\delta \EE[|X_0|^2]
+10\delta\left( 4C_\Phi^2T+2C_K^2T+4C_\Phi^2\int_0^T\EE[|X_s^\gamma|^2] ds\right)+10\delta+10C (\delta^{1/2}+\delta)  \,,
\end{align}
where we have used \eqref{EFbound} in the last inequality.
Recalling the fact that for all $\gamma\geq 1$
$$\sup_{s\in[0,T]}\EE[|X_s^\gamma|^2]\leq C(\EE[|X_0|^2],\EE[|V_0|^2],C_{\Phi},C_K,T)\,,$$
it easy to see that
 \begin{align*}
\EE[|\XG_{\beta+\delta}-\XG_{\beta}|^2]\leq C(\EE[|X_0|^2],\EE[|V_0|^2],C_{\Phi},C_K,T)(\delta+\delta^{\frac{1}{2}})\,.
\end{align*}
Hence for any $\varepsilon>0$, $\eta>0$, there exists some $\delta_0$ and $\gamma_0=1$, such that for all $\gamma\geq 1$ it holds that
\begin{equation}
\sup_{\delta\in[0,\delta_0]}\PP(|\XG_{\beta+\delta}-\XG_{\beta}|^2\geq \eta)\leq \sup_{\delta\in[0,\delta_0]}\frac{\EE[|\XG_{\beta+\delta}-\XG_{\beta}|^2]}{\eta}\leq \varepsilon\,.
\end{equation}
This completes the verification of condition $Con 2$ in Lemma \ref{lemAldous}. 
	\end{proof}

 Next we shall identify the limit process.
\begin{thm}[Overdamped limit]\label{thmlimit} 
For any $\gamma>0$ and $T>0$,	let $(\XG_t,\VG_t)_{t\in[0,T]}$ satisfy the system \eqref{2MVeq} up to time $T$ with $\nabla K\in L^\infty(\RR^d)\cap \mbox{Lip}(\RR^d)$,  $\Psi$ satisfying Assumption \ref{asum},  and the initial data $\operatorname{Law}(X_0,V_0)=f_0\in \mc{P}_2(\RR^{2d})$. Then as $\gamma\rightarrow \infty$, the sequence of  stochastic processes $\{\XG\}_{\gamma>0}$  converge weakly to $ X$, which is the unique solution to the following SDE:
	\begin{align}\label{1stSDE}
X_t=X_0+\int_0^tF(X_s,\rho_s)ds +\sqrt{2}B_t\,.
\end{align}
Moreover it holds that
\begin{equation}\label{est-convegence-m}
\sup_{t\in[0,T]}\EE[|\XG_t-X_t|^2]\leq \frac{C}{\gamma^2}e^{CT}\,,
\end{equation}
where $C$ depends only on $\EE[|X_0|^2],\EE[|V_0|^2],C_{\Phi},C_K,C_F$ and $T$.
\end{thm}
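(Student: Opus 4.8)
The plan is to establish the quantitative estimate \eqref{est-convegence-m} by a synchronous coupling of \eqref{2MVeq} with \eqref{1stSDE}, and then to read off the weak convergence from it together with the tightness proved in Theorem~\ref{thmtight}. By Theorem~\ref{thm-huang2021note}(ii) the limiting SDE \eqref{1stSDE} has a unique solution $X$; I would construct it on the same probability space as $\XG$, driven by the \emph{same} Brownian motion $B$ and started from the same initial position $X_0$ as in \eqref{2MVeq} (the initial velocity $V_0$ is irrelevant for \eqref{1stSDE}). Since $\rho_t^\gamma=\operatorname{Law}(\XG_t)$ and $\rho_t=\operatorname{Law}(X_t)$, the pair $(\XG_t,X_t)$ is an admissible coupling of $(\rho_t^\gamma,\rho_t)$, so $W_2^2(\rho_t^\gamma,\rho_t)\le\EE[\,|\XG_t-X_t|^2\,]$, and it suffices to bound $\EE[\,|\XG_t-X_t|^2\,]$ uniformly in $t\in[0,T]$.

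Subtracting \eqref{1stSDE} from the closed form \eqref{onlyX} of $\XG_t$ and using $\int_0^t dB_s=B_t$, I would write
\begin{align*}
\XG_t-X_t&=\frac{1}{\gamma}\bigl(1-e^{-\gamma^2t}\bigr)V_0+\int_0^t\bigl(1-e^{-\gamma^2(t-s)}\bigr)\bigl(F(\XG_s,\rho_s^\gamma)-F(X_s,\rho_s)\bigr)\ds\\
&\quad-\int_0^t e^{-\gamma^2(t-s)}F(X_s,\rho_s)\ds-\sqrt{2}\int_0^t e^{-\gamma^2(t-s)}\,dB_s\,,
\end{align*}
square, take expectations, and use $\big(\sum_{i=1}^4 a_i\big)^2\le4\sum_{i=1}^4a_i^2$. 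The first term contributes at most $\gamma^{-2}\EE[|V_0|^2]$. For the second, the Lipschitz bound \eqref{lipbound} together with $W_2^2(\rho_s^\gamma,\rho_s)\le\EE[|\XG_s-X_s|^2]$ gives $\EE[|F(\XG_s,\rho_s^\gamma)-F(X_s,\rho_s)|^2]\le4C_F^2\,\EE[|\XG_s-X_s|^2]$, and Cauchy--Schwarz in the bounded kernel $1-e^{-\gamma^2(t-s)}$ bounds this contribution by $4C_F^2T\int_0^t\EE[|\XG_s-X_s|^2]\ds$, which Gr\"onwall will absorb. For the third term, Cauchy--Schwarz against $\int_0^t e^{-\gamma^2(t-s)}\ds\le\gamma^{-2}$, together with the moment bound \eqref{secmen-CBO} (which via \eqref{Phibound}, \eqref{infbound} makes $\sup_{s\le T}\EE[|F(X_s,\rho_s)|^2]$ finite), yields a bound of order $\gamma^{-4}$. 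The last, purely stochastic term is controlled by It\^o's isometry: $\EE\big[\,\big|\sqrt{2}\int_0^t e^{-\gamma^2(t-s)}dB_s\big|^2\,\big]=2\int_0^t e^{-2\gamma^2(t-s)}\ds\le\gamma^{-2}$. Collecting the four estimates, for $\gamma\ge1$,
\[
\EE[|\XG_t-X_t|^2]\le\frac{C}{\gamma^2}+16C_F^2T\int_0^t\EE[|\XG_s-X_s|^2]\ds\,,
\]
with $C=C(\EE[|X_0|^2],\EE[|V_0|^2],C_\Phi,C_K,T)$, and Gr\"onwall's inequality gives \eqref{est-convegence-m}.

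For the weak convergence I would argue by compactness. The estimate just proved shows that for any $0\le t_1<\dots<t_n\le T$ the vector $(\XG_{t_1},\dots,\XG_{t_n})$ converges to $(X_{t_1},\dots,X_{t_n})$ in $L^2(\Omega)$, hence in law, so all finite-dimensional distributions of $\XG$ converge to those of $X$. By Theorem~\ref{thmtight}, for every sequence $\gamma_k\to\infty$ the laws $\{\rho^{\gamma_k}\}$ are tight on $\mc{C}([0,T];\RR^d)$; thus every subsequence has a further weakly convergent subsequence, and any such limit has the finite-dimensional distributions of $X$ and therefore equals $\operatorname{Law}(X)$. Since the subsequential limit is always $\operatorname{Law}(X)$, we get $\XG\rightharpoonup X$ in $\mc{C}([0,T];\RR^d)$ as $\gamma\to\infty$, and $X$ solves \eqref{1stSDE} by construction.

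I do not foresee a genuine obstacle. The only slightly delicate points are the bookkeeping of the kernels $e^{-\gamma^2(t-s)}$ in the three error terms — entirely parallel to the estimates \eqref{est-11}--\eqref{est-222} already carried out in the proof of Theorem~\ref{thmtight} — and the use of the uniform moment bound \eqref{secmen-CBO} to close the third term; everything else is Cauchy--Schwarz, It\^o's isometry and Gr\"onwall. The point worth stressing is that in the regular regime this single synchronous-coupling estimate simultaneously yields the convergence rate and, combined with tightness, identifies the limit, so no separate martingale-problem or Skorokhod-representation argument is needed.
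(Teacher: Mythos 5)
Your proposal is correct and follows essentially the same route as the paper: both subtract the limiting SDE \eqref{1stSDE} from the explicit representation \eqref{onlyX} under a synchronous coupling (same Brownian motion and same $X_0$), estimate the four resulting terms via the Lipschitz bound \eqref{lipbound}, Cauchy--Schwarz, It\^o's isometry and the moment bounds, and close with Gr\"onwall to get the $C\gamma^{-2}e^{CT}$ rate, with Theorem \ref{thmtight} supplying tightness on $\mc{C}([0,T];\RR^d)$. The only difference is organizational (and your remark that the third term is in fact $O(\gamma^{-4})$ is a harmless sharpening): you prove the quantitative estimate first and upgrade to weak convergence via finite-dimensional distributions plus tightness, whereas the paper first extracts a weak subsequential limit $\widehat X$ by tightness and then identifies $\widehat X=X$ using the same coupling estimate.
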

\begin{rmk}
	It follows from the definition of Wasserstein distance that
	\begin{equation}
	\sup_{t\in[0,T]}W_2^2(\rho^\gamma_t,\rho_t)\leq  \sup_{t\in[0,T]}\EE[|\XG_t-X_t|^2]\leq \frac{C}{\gamma^2}e^{CT}\,,
	\end{equation}
	which  is consistent with the result obtained in \cite[Theorem 1.3]{choi2020quantified}.
\end{rmk}

\begin{proof}
By Theorem \ref{thmtight} each subsequence $\{X^{\gamma_k}\}_{k\in\mathbb N}$ with $\gamma_0\geq 1$ and $\gamma_k$ converging increasingly to $0$ as $k\rightarrow \infty$  admits a subsequence (denoted w.l.o.g. by itself) that converges weakly.
This means that there exists some process $\widehat X$ as random variables valued in $\mc{C}([0,T];\RR^{d})$ such that
\begin{equation}\label{converge-as}
X\rightharpoonup \widehat X
\end{equation}
in the sense of distribution.

Recall the SDE satisfied by $X^{\gamma_k}$ in \eqref{Fubini}
\begin{align}\label{eq-X-mk}
X_t^{\gamma_k}=X_0+\frac{1}{\gamma_k}(1-e^{-\gamma_k^2t})V_0
+\int_0^t(1-e^{-\gamma_k^2(t-s)})  F(X_s^{\gamma_k},\rho_s^{\gamma_k }) ds+\sqrt{2}\int_0^t(1-e^{-\gamma_k^2(t-s)})dB_s\,. 
\end{align}
Notice that the estimate in \eqref{uniform-bd} implies that
\begin{equation}\label{estimate-L4-bd}  
\sup_{k\in\mathbb N} \sup_{t\in[0,T]}\EE[|X_t^{\gamma_k}|^2] \leq  C_1, \quad \text{ and thus, }\quad
\sup_{t\in[0,T]} \EE[|\widehat X_t|^2] \leq  C_1,
\end{equation}
with the constant $C_1$ being independent of $\gamma_k$.
We recall
$$
\EE[|F(X_s^\gamma,\rho_s^\gamma )|^2]\leq 4C_\Phi^2+2C_K^2+4C_\Phi^2\EE[|X_s^\gamma|^2]\,.
$$
Denoting by $ \rho_t$ the probability distribution of $X_t$ for $t\in[0,T]$, thus we have
\begin{equation}\label{supFbound}
\sup_{k\in\mathbb N} \sup_{t\in[0,T]}|F(X_s^{\gamma_k},\rho_s^{\gamma_k} )|\leq  C(C_1,C_K,C_\Phi), \quad \text{ and }\quad
\sup_{t\in[0,T]} |F( X_s,{\rho}_s)|\leq  C(C_1,C_K,C_\Phi)\,.
\end{equation}
 
Furthermore, by \eqref{lipbound} and the definition of Wasserstein distance, one has
\begin{equation}
 \EE[	|F(X_s^{\gamma_k},\rho^{\gamma_k}_s)- F( X_s, \rho_s)|^2]\leq C_F^2\EE[||X_s^{\gamma_k}- X_s|+W_2(\rho^{\gamma_k}_s, \rho_s)|^2]\leq 2 C_F^2\EE[|X_s^\gamma- X_s|^2],
\end{equation}
and thus,
\begin{align}\label{P1}
&\EE \left[ \left| \int_0^t(1-e^{-\gamma_k^2 (t-s)}) F(X_s^{\gamma_k},\rho^{\gamma_k}_s)ds- 
\int_0^t  F(X_s,\rho_s)ds \right|^2\right]\\                     
\leq &
2 \EE \left[  \left|
 \int_0^t(1-e^{-\gamma_k^2 (t-s)}) (F(X_s^{\gamma_k},\rho^{\gamma_k}_s)- F( X_s, \rho_s))ds \right|^2 \right]  +2\EE\left[\left|\int_0^te^{-\gamma_k^2 (t-s)}F(X_s,\rho_s)ds\right|^2\right]\\
 \leq&  C \EE \left[ \int_0^t  \left|  X_s-X_s^{\gamma_k} \right|^2 ds \right]+C\int_0^te^{-2\gamma_k^2 (t-s)}ds\EE\left[\int_0^T|F( X_s,\rho_s)|^2ds\right]  \\
\leq&
C \EE \left[ \int_0^t  \left|   X_s-X_s^{\gamma_k} \right|^2 ds \right]
+ C \frac{1}{\gamma_k^2}\,,
\end{align}
where the constant $C$ is independent of $k$, and we have used  the boundedness in \eqref{supFbound}.
For the stochastic integrals, it holds analogously that
\begin{align}\label{P2}
&\EE \left[ \left|
 \int_0^t(1-e^{-\gamma_k^2 (t-s)}) dB_s
-  \int_0^t dB_s \right|^2\right]=\EE \left[ \left|
\int_0^te^{-\gamma_k^2 (t-s)} dB_s \right|^2\right]
\nonumber \\
=&\EE \left[
\int_0^te^{-2\gamma_k^2 (t-s)} ds \right]\leq C\frac{1}{\gamma_k^2}\,.
\end{align}
Additionally, it is obvious that
\begin{equation}\label{P3}
|\frac{1}{\gamma_k}(1-e^{-\gamma_k^2t})V_0|^2\leq \frac{1}{\gamma_k^2}|V_0|^2\,.
\end{equation}

Therefore collecting estimates \eqref{P1}--\eqref{P3} and subtracting both sides of SDEs \eqref{eq-X-mk} and  \eqref{1stSDE}, one has
\begin{align}
\EE[|X_t^{\gamma_k}-X_t|^2]\leq C\int_0^t\EE[|X_s^{\gamma_k}-X_s|^2]ds+C\frac{1}{\gamma_k^2}\,,
\end{align}
where $C$ depends only on $\EE[|X_0|^2],\EE[|V_0|^2],C_{F},C_\Phi,C_K$ and $T$. By Gronwall's inequality implies that
\begin{equation}\label{convergence-l2}
\sup_{t\in[0,T]}\EE[|X_t^{\gamma_k}-X_t|^2]\leq C\frac{1}{\gamma_k^2}e^{CT}\,.
\end{equation}
In view of both the convergences \eqref{converge-as} and \eqref{convergence-l2}, we must have $\widehat X = X$. 
Finally, due to the arbitrariness of the subsequence $\{X^{\gamma_k}\}_{k\in\mathbb N}$ and the uniqueness of $X$, we conclude that as $\gamma\rightarrow \infty$, the sequence of  stochastic processes $\{\XG\}_{\gamma>0}$  converge weakly to  the unique solution $ X$ to SDE \eqref{1stSDE}, with the estimate \eqref{est-convegence-m} following in the same way as \eqref{convergence-l2}.
\end{proof}

\section{Singular interaction potential}
In this section we assume that $\nabla K\in L_{loc}^q(\RR^d)$ for some $q\in (1,\infty]$. More specifically it satisfies
\begin{assum}\label{asum1}
There exist some $C_K'>0$ such that
\begin{equation}
C_K':=\norm{\nabla K}_{L^q(B_{2R})}+\norm{\nabla K}_{W^{1,\infty}(\RR^d \backslash B_R)}<\infty \mbox{ for some }R>0\mbox{ and }q\in (1,\infty]\,,
\end{equation}
where $B_R$ represents a ball of radius $R$ and centered at origin.
\end{assum}

As it has been mentioned in Introduction, for singular case we require more regular solutions $\fm$ to the kinetic equation \eqref{VFPeq}. For $p\in[1,\infty)$, we consider the space of weighted measurable functions $L_H^p(\RR^{2d})$ with the norm
\begin{equation}
\norm{\fm_t}_{L_H^p}:=\left(\iint_{\RR^d\times\R^d}|\fm_t|^pe^{(p-1)H}dxdy\right)^{\frac{1}{p}}\,,
\end{equation}
where $H(x,v)=\Phi(x)+\frac{|v|^2}{2}$. For any integer $k\in \NN$, $W_{x,H}^{k,p}$ represents $L_H^p$ Sobolev space of $k$-th order in $x$ with the norm
\begin{equation}
\norm{\fm_t}_{W_{x,H}^{k,p}}:=\left(\sum_{|\alpha|\leq k}\iint_{\RR^d\times\R^d}|\nabla_x^\alpha\fm_t|^pe^{(p-1)H}dxdy\right)^{\frac{1}{p}}\,.
\end{equation}
First we recall the following implied regularity for $\rom$:
\begin{lem}{\cite[Lemma 2.9]{choi2020quantified}}\label{lmineq}
If $\fm_t\in W_{x,H}^{1,p}(\RR^{2d})$, then $\rom_t\in W^{1,p}(\RR^d)$. In particular, if $p>d$, one has $\rom_t\in L^\infty(\RR^d)$\,.
\end{lem}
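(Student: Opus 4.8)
The plan is to derive the regularity of $\rom_t$ by integrating out the velocity variable, compensating for the loss of the weight $e^{(p-1)H}$ by a weighted Hölder inequality that exploits the positivity of $\Phi$ and the Gaussian tail of $e^{-|v|^2/2}$.

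First I would record the pointwise bound: for any $g\in\{\fm_t,\partial_{x_1}\fm_t,\dots,\partial_{x_d}\fm_t\}$, writing $g=\big(g\,e^{(p-1)H/p}\big)\,e^{-(p-1)H/p}$ and applying Hölder in $v$ with exponents $p$ and $p'=p/(p-1)$,
\[
\Big|\int_{\RR^d}g(x,v)\dv\Big|\le\Big(\int_{\RR^d}|g(x,v)|^p e^{(p-1)H(x,v)}\dv\Big)^{1/p}\Big(\int_{\RR^d}e^{-H(x,v)}\dv\Big)^{1/p'}.
\]
Since $H(x,v)=\Phi(x)+|v|^2/2$ with $\Phi\ge0$, the last factor equals $\big((2\pi)^{d/2}e^{-\Phi(x)}\big)^{1/p'}\le(2\pi)^{d/(2p')}$. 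Raising to the power $p$ and integrating in $x$ gives $\norm{\int_{\RR^d}g(\cdot,v)\dv}_{L^p(\RR^d)}^p\le C_{d,p}\iint_{\RR^d\times\RR^d}|g|^p e^{(p-1)H}\dx\dv$. In particular, with $g=\fm_t$ this already shows $\rom_t\in L^p(\RR^d)$, and each $\int|g|\dv$ belongs to $L^p_x\subset L^1_{loc}(\RR^d)$, so that $\fm_t,\nabla_x\fm_t\in L^1_{loc}(\RR^{2d})$.

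Next I would check that the weak gradient of $\rom_t=\int_{\RR^d}\fm_t(\cdot,v)\dv$ commutes with the $v$-integration: testing against $\varphi\in C_c^\infty(\RR^d)$, Fubini's theorem (legitimate by the $L^1_{loc}$ bound just obtained) and the definition of the weak $x$-derivative of $\fm_t$ yield $\int_{\RR^d}\rom_t\,\partial_{x_i}\varphi\dx=-\iint_{\RR^d\times\RR^d}\partial_{x_i}\fm_t\,\varphi\dx\dv$, so $\nabla_x\rom_t(x)=\int_{\RR^d}\nabla_x\fm_t(x,v)\dv$. Applying the pointwise bound above with $g=\partial_{x_i}\fm_t$ and summing over $i$ then gives $\norm{\rom_t}_{W^{1,p}(\RR^d)}\le C_{d,p}\norm{\fm_t}_{W^{1,p}_{x,H}}<\infty$, which is the first claim. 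Finally, when $p>d$ the Morrey embedding $W^{1,p}(\RR^d)\hookrightarrow C^{0,1-d/p}(\RR^d)\subset L^\infty(\RR^d)$ applied to $\rom_t$ delivers $\rom_t\in L^\infty(\RR^d)$.

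I do not expect a genuine obstacle in this lemma; the only point that needs a line of care is the interchange of weak differentiation and $v$-integration, which is exactly the Fubini computation above once the local integrability of $\fm_t$ and $\nabla_x\fm_t$ is secured, and that local integrability is itself a byproduct of the weighted Hölder estimate.
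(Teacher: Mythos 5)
Your proof is correct; the note itself does not reproduce an argument for this lemma (it is quoted from \cite{choi2020quantified}), and your weighted H\"older estimate in $v$ (splitting $g=\big(ge^{(p-1)H/p}\big)e^{-(p-1)H/p}$, using $\Phi\geq 0$ and the Gaussian integral in $v$) followed by Morrey's embedding for $p>d$ is exactly the standard argument behind the cited result. The only detail left implicit is that the test function $\varphi(x)$ is not compactly supported in $v$, so the identity $\nabla_x\rom_t=\int_{\RR^d}\nabla_x\fm_t\,dv$ formally needs a cutoff $\chi(v/R)$ and dominated convergence, which the local integrability of $\fm_t$ and $\nabla_x\fm_t$ that you established already justifies.
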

Now we state the result of the well-posedness of local-in-time solutions to the VFP equation \eqref{VFPeq} and $\gamma$ independent estimates:
\begin{thm}{\cite[Theorem 1.4]{choi2020quantified}}\label{thmexist}
	Let $T>0$ and suppose that the interaction potential $K$ satisfy Assumption \ref{asum1}. Let the initial data $f_0\in \mc{P}_2\cap W_{x,H}^{1,p}(\RR^{2d})$, $p>\max\{d,q/(q-1)\}$. There exists a positive time $T_p\in (0,T]$, and a unique solution
	\begin{equation}
f^{\gamma}\in \mathcal{C}\left(\left[0, T_{p}\right] ; \mathcal{P}_{2}\left(\mathbb{R}^{d} \times \mathbb{R}^{d}\right)\right) \cap L^{\infty}\left(\left[0, T_{p}\right] ; W_{x, H}^{1, p}\left(\mathbb{R}^{d} \times \mathbb{R}^{d}\right)\right)
	\end{equation}
	to \eqref{VFPeq} satisfying
	\begin{equation}
	\sup_{\gamma\geq 1}\sup_{t\in[0,T_p]}\norm{\fm_t}_{W_{x, H}^{1, p}}<\infty\,.
	\end{equation}
Lemma \ref{lmineq}  implies that
	\begin{equation}\label{rhoes}
	\sup_{\gamma\geq 1}\sup_{t\in[0,T_p]}\norm{\rom_t}_{L^\infty}<\infty\,.
	\end{equation}
\end{thm}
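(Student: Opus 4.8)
This is \cite[Theorem~1.4]{choi2020quantified}; the plan is a linearization-and-fixed-point scheme. \emph{First}, freeze the density: for $\bar\rho$ in a ball $\mc{B}_M$ of $\mc{C}([0,T];\mc{P}_2\cap W^{1,p}(\RR^d))$ (with $M$ and $T$ to be chosen), form the drift $\bar F(t,x):=-\nabla\Phi(x)-(\nabla K\ast\bar\rho_t)(x)$ and solve the associated \emph{linear} kinetic Fokker--Planck equation for $\fm$. Splitting $\nabla K$ into a part supported in $B_{2R}$ (which lies in $L^q$ by Assumption~\ref{asum1}) and a part on $\RR^d\setminus B_R$ (which lies in $W^{1,\infty}$), and using Young's convolution inequality together with the Sobolev embedding $W^{1,p}(\RR^d)\hookrightarrow L^\infty(\RR^d)$ valid for $p>d$ (so that $\bar\rho_t\in L^\infty$ by Lemma~\ref{lmineq}), one obtains bounds on $\|\bar F(t,\cdot)\|_{L^\infty}$ and on $\nabla_x\bar F$ in terms of $M$, $C_K'$ and $C_\Phi$; this is where the restriction $p>\max\{d,q/(q-1)\}$ enters. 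For a fixed $\gamma>0$ this linear equation is non-degenerate parabolic in $v$ and hypoelliptic overall, hence well-posed in $\mc{C}([0,T];\mc{P}_2)\cap L^\infty([0,T];W_{x,H}^{1,p})$ by standard linear theory (a regularized Galerkin scheme, closed by the a priori bound below).

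\emph{Second} --- and this is the core --- I would establish a weighted $W_{x,H}^{1,p}$ estimate uniform in $\gamma\ge1$. Write $\fm=e^{-H}u$, so that $\|\fm\|_{L_H^p}^p=\int|u|^p e^{-H}$, and test the equation for $\nabla_x^\alpha\fm$, $|\alpha|\le1$, against $|\nabla_x^\alpha\fm|^{p-2}\nabla_x^\alpha\fm\,e^{(p-1)H}$. The weight $e^{(p-1)H}$ is exactly the inverse equilibrium weight, so the Fokker--Planck term produces the good dissipation $-\gamma^2(p-1)\int e^{(p-1)\Phi}e^{-|v|^2/2}|\nabla_x^\alpha u|^{p-2}|\nabla_v\nabla_x^\alpha u|^2\le 0$, while the transport term $\gamma v\cdot\nabla_x$ and the force term $\gamma\nabla_v\cdot(\bar F\,\cdot)$, after integration by parts and commutation with $\nabla_x^\alpha$, contribute terms carrying a single power of $\gamma$ and weights $|v|$, $\nabla_x^\beta\nabla\Phi$ and $\nabla_x^\beta\bar F$ ($|\beta|\le|\alpha|$). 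These are absorbed into the $\gamma^2$ dissipation by Young's inequality, using the weighted Poincar\'e-type inequality $\int|w|^p|v|^2 e^{-|v|^2/2}\dv\lesssim\int(|w|^p+|w|^{p-2}|\nabla_v w|^2)e^{-|v|^2/2}\dv$ to trade velocity moments for the dissipative norm, and using the at most linear growth of $\nabla\Phi$ from Assumption~\ref{asum} for the $x$-weights. A Gr\"onwall argument then yields
\[
\sup_{t\in[0,T_p]}\|\fm_t\|_{W_{x,H}^{1,p}}\le\Lambda\big(\|f_0\|_{W_{x,H}^{1,p}},M,C_K',C_\Phi\big)
\]
for all $\gamma\ge1$, with $\Lambda$ non-decreasing and independent of $\gamma$, provided the time $T_p$ is small enough.

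\emph{Third}, close the loop. By first choosing $M$ large depending on $\|f_0\|_{W_{x,H}^{1,p}}$ and then $T_p$ small depending on $M$, the solution map $\bar\rho\mapsto\rom$ (the $x$-marginal of $\fm$) sends $\mc{B}_M$ into itself. A stability estimate for the difference of two solutions --- of the same type as the one above, with $\bar F_1-\bar F_2$ estimated through the splitting of $\nabla K$ in terms of a suitable distance between $\bar\rho_1$ and $\bar\rho_2$ --- shows the map is a contraction (first in a weaker metric, then bootstrapped to $\mc{B}_M$), giving the unique fixed point, which is the desired local-in-time solution $\fm$. Lemma~\ref{lmineq} then gives \eqref{rhoes}, and since $M$, $T_p$ and $\Lambda$ do not depend on $\gamma$ for $\gamma\ge1$, all the bounds are uniform in $\gamma$.

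\emph{Main obstacle.} The delicate point is the uniformity in $\gamma$: the transport and interaction terms come with only a single factor of $\gamma$, so the estimate closes only because the $\gamma^2$ Fokker--Planck dissipation --- available precisely because $e^{(p-1)H}$ is the inverse equilibrium weight --- dominates them after Young's inequality and the weighted Poincar\'e inequality. A secondary difficulty is controlling the singular force $\nabla K\ast\rom$ and its $x$-derivative in $L^\infty$ and $L^p$, which is exactly what dictates the hypotheses $p>\max\{d,q/(q-1)\}$ and $f_0\in W_{x,H}^{1,p}$.
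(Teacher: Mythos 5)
First, a point of comparison: the paper does not prove Theorem \ref{thmexist} at all --- it is imported verbatim from \cite[Theorem 1.4]{choi2020quantified} (with \eqref{rhoes} following via \cite[Lemma 2.9]{choi2020quantified}), so your sketch has to be measured against the proof in that reference. Your architecture (freeze the density, solve the linear hypoelliptic equation, prove a uniform-in-$\gamma$ weighted $W^{1,p}_{x,H}$ estimate, close by a fixed point) is indeed the standard route and matches the reference in outline, as does your use of the kernel splitting and $p>\max\{d,q/(q-1)\}$ to control $\bar F$ and $\nabla_x\bar F$.

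However, the mechanism you give for the decisive step --- uniformity in $\gamma\ge 1$ --- does not close as written. Write $\fm=u e^{-H}$ as you do (note, incidentally, that the dissipation then carries the weight $e^{-H}=e^{-\Phi}e^{-|v|^2/2}$, not $e^{(p-1)\Phi}e^{-|v|^2/2}$). Testing with $p|u|^{p-2}u\,e^{-H}$, the transport and force terms leave contributions of the form $\gamma\int (b\cdot v)\,|u|^p e^{-H}$ with $b=\nabla K\ast\bar\rho_t$, and contributions proportional to $\gamma\int (v\cdot\nabla\Phi)\,|u|^p e^{-H}$. Your plan --- Young's inequality followed by the Gaussian Poincar\'e inequality --- turns $\gamma|v|$ into $\varepsilon\gamma^2|v|^2+C$, and then $\gamma^2\int|v|^2|u|^pe^{-H}$ into $\gamma^2\int|u|^pe^{-H}$ plus a dissipative term; the leftover $\gamma^2\int|u|^pe^{-H}$ enters Gronwall with rate $\gamma^2$, which destroys exactly the uniformity the theorem asserts. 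Worse, the $v\cdot\nabla\Phi$ terms cannot be handled through the ``at most linear growth'' of $\nabla\Phi$: $|\nabla\Phi(x)|\sim|x|$ is only controlled via Assumption \ref{asum}(2), which consumes the whole factor $e^{-\Phi}$ and leaves an integral no longer dominated by the $L^p_H$ norm. What actually makes the estimate close is structural: (i) the $v\cdot\nabla\Phi$ contributions coming from the transport term and from the $-\nabla\Phi$ part of the force cancel exactly, precisely because $e^{(p-1)H}$ is the inverse equilibrium weight --- this cancellation, not just the sign of the dissipation, is the real role of the weight; and (ii) the remaining single-$\gamma$ term $\gamma(p-1)\int(b\cdot v)|u|^pe^{-H}$ is treated by writing $v\,e^{-|v|^2/2}=-\nabla_v e^{-|v|^2/2}$ and integrating by parts in $v$, which produces $\gamma\norm{b}_{L^\infty}\int|u|^{p-1}|\nabla_v u|e^{-H}$; Young then splits this into $\varepsilon\gamma^2\int|u|^{p-2}|\nabla_v u|^2e^{-H}$ (absorbed by the dissipation) plus $C\norm{b}_{L^\infty}^2\int|u|^pe^{-H}$ (Gronwall with a $\gamma$-independent rate). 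Without these two devices your Gronwall constant degenerates as $\gamma\to\infty$, so the uniform bound $\sup_{\gamma\ge1}\sup_{t\in[0,T_p]}\norm{\fm_t}_{W^{1,p}_{x,H}}<\infty$ is not established by the argument as you describe it; the rest of your sketch (linear solvability, contraction, and Lemma \ref{lmineq} yielding \eqref{rhoes}) is fine in outline.
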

 Using above theorem we can proved the uniform bound of $\nabla K\ast \rom_t$ under additional assumption of $K$:
 \begin{assum}\label{asum2}
 	The interaction potential $K$ satisfies one of the following conditions:
 	\begin{itemize}
 		\item $\nabla K\in W^{1,1}(B_{2R})$;
 		\item $K$ is given by the Newtonian potential, i.e. $\pm\Delta K=\delta_0$, where $\delta_0$ denotes the Dirac measure on $\RR^d$ giving unit mass to the origin.
 	\end{itemize}
 \end{assum}

\begin{lem}\label{lmuniK}
	Assume that $K$ satisfy Assumption \ref{asum1}--\ref{asum2} and let $\fm,\rom$ be the regular solution to \eqref{VFPeq} obtained in Theorem \ref{thmexist}, It holds that
	\begin{equation}\label{uniK}
\sup_{\gamma\geq 1}\sup_{t\in[0,T_p]}\norm{\nabla K\ast \rom_t}_{W^{1,\infty}}<\infty\,.
	\end{equation}
\end{lem}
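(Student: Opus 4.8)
The plan is to split $\nabla K$ near and far from its singular set and estimate the two contributions separately by Young's inequality, feeding in the $\gamma$-uniform bounds from Theorem \ref{thmexist}. Fix a cutoff $\chi\in C_c^\infty(\RR^d)$ with $\chi\equiv1$ on $B_R$, $\operatorname{supp}\chi\subset B_{2R}$ and $0\leq\chi\leq1$, and write $\nabla K=K_1+K_2$ with $K_1:=\chi\,\nabla K$ (compactly supported in $B_{2R}$) and $K_2:=(1-\chi)\,\nabla K$ (supported in $\RR^d\setminus B_R$). Assumption \ref{asum1} gives $\norm{K_1}_{L^q(\RR^d)}\leq C_K'$ and $\norm{K_2}_{W^{1,\infty}(\RR^d)}\leq C(\chi)\,C_K'$. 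On the other side, $\rom_t\in\mc{P}(\RR^d)$ forces $\norm{\rom_t}_{L^1}=1$, the bound \eqref{rhoes} gives $\sup_{\gamma\geq1}\sup_{t\in[0,T_p]}\norm{\rom_t}_{L^\infty}<\infty$, and Lemma \ref{lmineq} combined with Theorem \ref{thmexist} gives $\sup_{\gamma\geq1}\sup_{t\in[0,T_p]}\norm{\rom_t}_{W^{1,p}}<\infty$ with $p>\max\{d,q/(q-1)\}$.

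The far part is immediate: since $\nabla(K_2\ast\rom_t)=(\nabla K_2)\ast\rom_t$, Young's inequality and $\norm{\rom_t}_{L^1}=1$ give $\norm{K_2\ast\rom_t}_{W^{1,\infty}}\leq\norm{K_2}_{W^{1,\infty}}\leq C(\chi)C_K'$, uniformly in $\gamma$ and $t$. For the near part, Young's inequality with the conjugate pair $(q,q')$ together with the interpolation $\norm{\rom_t}_{L^{q'}}\leq\norm{\rom_t}_{L^1}^{1/q'}\norm{\rom_t}_{L^\infty}^{1-1/q'}$ yields the uniform bound $\norm{K_1\ast\rom_t}_{L^\infty}\leq\norm{K_1}_{L^q}\norm{\rom_t}_{L^{q'}}$. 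What remains is to control $\nabla(K_1\ast\rom_t)$ in $L^\infty$, and this is exactly where the dichotomy of Assumption \ref{asum2} enters. If $\nabla K\in W^{1,1}(B_{2R})$, then $K_1\in W^{1,1}(\RR^d)$ with $\nabla K_1=(\nabla\chi)\nabla K+\chi\nabla^2K\in L^1(\RR^d)$, so $\nabla(K_1\ast\rom_t)=(\nabla K_1)\ast\rom_t$ and $\norm{(\nabla K_1)\ast\rom_t}_{L^\infty}\leq\norm{\nabla K_1}_{L^1}\norm{\rom_t}_{L^\infty}$ is uniformly bounded. If instead $K$ is the Newtonian potential, $\nabla^2 K$ is a Calderón--Zygmund kernel, not locally integrable, so one cannot write $\nabla^2 K\ast\rom_t$; one instead transfers the derivative onto $\rom_t\in W^{1,p}(\RR^d)$, $\nabla(K_1\ast\rom_t)=K_1\ast\nabla\rom_t$, and Young's inequality gives $\norm{K_1\ast\nabla\rom_t}_{L^\infty}\leq\norm{K_1}_{L^{p'}}\norm{\nabla\rom_t}_{L^p}$; here $p>q/(q-1)$ forces $p'<q$, so since $K_1$ is supported in $B_{2R}$, Hölder gives $\norm{K_1}_{L^{p'}}\leq|B_{2R}|^{1/p'-1/q}\norm{K_1}_{L^q}<\infty$, while $\norm{\nabla\rom_t}_{L^p}$ is uniformly bounded. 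Collecting the near and far estimates yields \eqref{uniK}.

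I expect the Newtonian sub-case to be the main obstacle: because $\nabla^2 K\notin L^1_{\mathrm{loc}}$ one must avoid estimating $\nabla(\nabla K\ast\rom_t)$ as a convolution of $\nabla^2 K$ against the density, and instead use the extra Sobolev regularity of $\rom_t$, which is available precisely because the solvability threshold $p>\max\{d,q/(q-1)\}$ in Theorem \ref{thmexist} is compatible, via $p'<q$, with the local $L^q$ integrability of $\nabla K$. A minor point is to justify the distributional identities $\nabla(K_i\ast\rom_t)=(\nabla K_i)\ast\rom_t$ and $\nabla(K_1\ast\rom_t)=K_1\ast\nabla\rom_t$ under the stated integrability, which follows from standard mollification arguments.
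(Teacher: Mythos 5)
Your proof is correct, and on the $L^\infty$ estimate and on the case $\nabla K\in W^{1,1}(B_{2R})$ it is essentially the paper's own argument: the paper splits at the sharp radius $R$ rather than with a smooth cutoff, bounds $\int_{B_R}|\nabla K|(x-y)\rom_t(y)\,dy$ by $\norm{\nabla K}_{L^q(B_R)}\norm{\rom_t}_{L^{q/(q-1)}}$ exactly as you do, and handles the gradient in the $W^{1,1}$ sub-case ``by the same arguments'', i.e.\ $\nabla^2 K\in L^1(B_{2R})$ against $\rom_t\in L^\infty$. Where you genuinely diverge is the Newtonian sub-case: the paper invokes $\nabla\cdot(\nabla K\ast\rom_t)=\pm\rom_t$ and from this asserts $\norm{\nabla K\ast\rom_t}_{W^{1,\infty}}\le C\norm{\rom_t}_{L^\infty}$, so it only consumes the uniform bound \eqref{rhoes}; you instead transfer the derivative onto the density, $\nabla(K_1\ast\rom_t)=K_1\ast\nabla\rom_t$, and pay with the $\gamma$-uniform $W^{1,p}$ bound on $\rom_t$, using $p>\max\{d,q/(q-1)\}$ so that $p'<q$ (and $p'<d/(d-1)$ for the Newtonian kernel) makes $K_1\in L^{p'}(B_{2R})$. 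Your route is the more careful one: knowing the divergence alone does not control the full matrix $\nabla^2K\ast\rom_t$ in $L^\infty$ by $\norm{\rom_t}_{L^\infty}$ (this is the Calder\'on--Zygmund endpoint issue you flag), so the paper's one-line Newtonian step is really leaning on the estimates imported from \cite{choi2020quantified}, whereas your argument supplies the needed extra regularity explicitly and, as a by-product, treats both alternatives of Assumption \ref{asum2} by one mechanism. The only caveat is that you need the quantitative form of Lemma \ref{lmineq}, i.e.\ $\norm{\rom_t}_{W^{1,p}}\le C\norm{\fm_t}_{W_{x,H}^{1,p}}$, in order to convert the uniform bound of Theorem \ref{thmexist} into $\sup_{\gamma\ge1}\sup_{t\in[0,T_p]}\norm{\nabla\rom_t}_{L^p}<\infty$; this is exactly how the paper itself extracts \eqref{rhoes}, so it is legitimate, but it should be stated.
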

 \begin{proof}
 	For $x\in \RR^d$ we obtain 
 	\begin{equation}
 	\nabla K\ast \rom_t(x)=\int_{\RR^d}\nabla K(x-y)\rom_t(y)dy\leq \int_{B_R}|\nabla K|(x-y)\rom_t(y)dy+C_K'\,.
 	\end{equation}
 	Notice that
 	\begin{equation}
 	\int_{B_R}|\nabla K|(x-y)\rom_t(y)dy\leq \norm{\nabla K}_{L^q(B_R)}\norm{\rom_t}_{L^{q/(q-1)}}<C(C_K',\norm{\rom_t}_{L^\infty})\,.
 	\end{equation}
 	By \eqref{rhoes}, this gives
 	\begin{equation}
 	\sup_{\gamma\geq 1}\sup_{t\in[0,T_p]}\norm{\nabla K\ast \rom_t}_{L^\infty}<\infty\,.
 	\end{equation}
 	If we additional assume that  $\nabla K\in W^{1,1}(B_{2R})$, following the same arguments as above we can easily obtain
 	\begin{equation}
 	\sup_{\gamma\geq 1}\sup_{t\in[0,T_p]}\norm{\nabla K\ast \rom_t}_{W^{1,\infty}}<\infty\,.
 	\end{equation}
 	On the other hand if we additional assume that $\pm\Delta K=\delta_0$, which means that $\nabla\cdot (\nabla K\ast \rom_t) =\pm\rom_t$. Thus we have $\norm{\nabla K\ast \rom_t}_{W^{1,\infty}}\leq C\norm{\rom_t}_{L^\infty}$, which implies \eqref{uniK}.
 	\end{proof}

Let us give a Lipschitz-type estimate for the interaction force $\nabla K$:
\begin{lem}\label{lmlipK}
	Assume that $K$ satisfy Assumption \ref{asum1}--\ref{asum2}. Let $\XG,X$ be two random variables with density $\rom,\rho \in\mc{P}_2\cap L^\infty(\RR^d)$ such that
	\begin{equation}
		C_\infty:=\max\{\|\rom\|_{L^\infty},\|\rho\|_{L^\infty}\}<\infty\,.
	\end{equation}
	Then it holds that
	\begin{equation}\label{case1}
	\EE[|\nabla K\ast\rom (\XG)-\nabla K\ast\rho (X)|^2 ]\leq C\left(\norm{\nabla K\ast\rom}_{W^{1,\infty}} ,C_\infty, \norm{\nabla^2 K}_{L^1(B_{2R})}^2,C_K'\right)\EE[|\XG-X|^2]
	\end{equation}
	or
		\begin{equation}\label{case2}
	\EE[|\nabla K\ast\rom (\XG)-\nabla K\ast\rho (X)|^2 ]\leq C\left(\norm{\nabla K\ast\rom}_{W^{1,\infty}} ,C_\infty\right)\EE[|\XG-X|^2]\,.
	\end{equation}
\end{lem}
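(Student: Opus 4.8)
The plan is to split the interaction force difference into two regimes according to the singularity set of $\nabla K$. Write
\[
\nabla K\ast\rom(\XG)-\nabla K\ast\rho(X)
=\bigl(\nabla K\ast\rom(\XG)-\nabla K\ast\rom(X)\bigr)
+\bigl(\nabla K\ast\rom(X)-\nabla K\ast\rho(X)\bigr).
\]
The first term is controlled purely by the uniform Lipschitz bound: by Lemma \ref{lmuniK}, $\nabla K\ast\rom\in W^{1,\infty}$ uniformly, so $|\nabla K\ast\rom(\XG)-\nabla K\ast\rom(X)|\le \norm{\nabla K\ast\rom}_{W^{1,\infty}}|\XG-X|$ pointwise, and after taking expectations this contributes $C\norm{\nabla K\ast\rom}_{W^{1,\infty}}^2\,\EE[|\XG-X|^2]$. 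This already gives the clean estimate \eqref{case2}, which is what one uses in the Newtonian case where no further structure of $\nabla^2 K$ is available — so I would first record \eqref{case2} from this observation alone, then turn to \eqref{case1}.

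For the second term I would couple $\XG$ and $X$ optimally, writing it as $\int_{\RR^d}\nabla K(X-y)\,(\rom-\rho)(y)\dy$ and expressing $\rom-\rho$ via the optimal transport plan $\pi\in\Pi(\rom,\rho)$, i.e. as $\int\bigl(\nabla K(X-y)-\nabla K(X-\hat y)\bigr)\,\rd\pi(y,\hat y)$. Next I split the $y$-integration into the far field $\{|X-y|\ge R\}$ (with the corresponding restriction on $\hat y$) and the near field. On the far field $\nabla K$ is $W^{1,\infty}$ by Assumption \ref{asum1}, giving a contribution bounded by $C_K'\,|y-\hat y|$ under the integral; squaring and using $\int|y-\hat y|^2\,\rd\pi=W_2^2(\rom,\rho)\le\EE[|\XG-X|^2]$ handles this piece.

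The near-field piece is the main obstacle: here one must estimate $\int_{B_{2R}}\bigl|\nabla K(X-y)-\nabla K(X-\hat y)\bigr|$ against $|y-\hat y|$ using only $\nabla^2 K\in L^1(B_{2R})$ (under the first bullet of Assumption \ref{asum2}). The standard device is the pointwise bound
\[
\bigl|\nabla K(z)-\nabla K(z')\bigr|\le |z-z'|\int_0^1|\nabla^2 K|\bigl(z'+\theta(z-z')\bigr)\dtheta
\]
valid a.e.; integrating in $y$ (equivalently $z$), using Fubini in $\theta$, and the $L^\infty$ bound $C_\infty$ on the densities, one converts $\int_{B_{2R}}|\nabla^2 K|(X-y-\theta(\hat y - y))\dy$ into $\norm{\nabla^2 K}_{L^1(B_{2R})}$ uniformly in $\theta$ and $X$ (after enlarging the ball to absorb the translation by $\theta(\hat y-y)$, which is legitimate since $y,\hat y$ lie in a bounded region once $|X-y|\le R$ and $W_2$ is finite). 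This yields a bound of the form $C\bigl(\norm{\nabla K\ast\rom}_{W^{1,\infty}},C_\infty,\norm{\nabla^2 K}_{L^1(B_{2R})},C_K'\bigr)\,\EE[|\XG-X|^2]$, i.e. \eqref{case1}. The delicate points to get right are: the measurability/a.e. validity of the mean-value bound for the merely $W^{1,1}$ function $\nabla K$ (handled by mollification and passage to the limit), and keeping all translations of the ball $B_{2R}$ under control so that the $L^1$ norm of $\nabla^2 K$ is the only constant that enters — for which one uses that the optimal coupling keeps $|y-\hat y|$ square-integrable and that outside a fixed large ball $\nabla^2 K$ is bounded anyway.
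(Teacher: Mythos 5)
Your decomposition and your treatment of the first term (the uniform $W^{1,\infty}$ bound from Lemma \ref{lmuniK}) and of the far field (Lipschitz of $\nabla K$ off $B_R$ against the coupling, i.e. essentially $W_1\le W_2$) match the paper. But there is a genuine gap in how you claim \eqref{case2}: you assert it follows ``from this observation alone,'' i.e. from the Lipschitz bound on $\nabla K\ast\rom$. That only controls $\nabla K\ast\rom(\XG)-\nabla K\ast\rom(X)$; it says nothing about the second term $\nabla K\ast(\rom-\rho)(X)$, which compares two different densities at the same point and is not governed by any Lipschitz constant of $\nabla K\ast\rom$. In the Newtonian case you also cannot fall back on your near-field argument, since $\nabla^2 K\sim |x|^{-d}$ is not in $L^1(B_{2R})$. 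The paper closes this term with the Loeper-type estimate $\norm{\nabla K\ast(\rom-\rho)}_{L^2}^2\le C_\infty W_2^2(\rom,\rho)$ (estimate (3) of \cite[Theorem 3.3]{choi2020quantified}), combined with the bounded density of $X$ to pass from the $L^2_x$ norm to $\EE[|\nabla K\ast(\rom-\rho)(X)|^2]$ — this is exactly why $C_\infty$ appears in \eqref{case2}, which your derivation cannot explain.

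For \eqref{case1}, your plan amounts to reproving estimate (2) of \cite[Theorem 3.3]{choi2020quantified}, which the paper simply cites. Your sketch of that step is too optimistic at the key point: the integral is against the transport plan $\rd\pi(y,\hat y)$, not Lebesgue measure in $y$, and the translated argument $X-y-\theta(\hat y-y)$ involves both coupled variables. Converting $\int_0^1\!\!\iint |\nabla^2K|(X-y-\theta(\hat y-y))\,\rd\pi\,\dtheta$ into $\norm{\nabla^2K}_{L^1(B_{2R})}$ requires disintegrating $\pi$, changing variables $y\mapsto (1-\theta)y+\theta\hat y$ (or in $\hat y$), whose Jacobian $(1-\theta)^{-d}$ (resp. $\theta^{-d}$) degenerates; the standard fix is to split $\theta\in[0,1/2]$ and $[1/2,1]$ and use the $L^\infty$ bounds of the two marginals separately, together with a Cauchy--Schwarz splitting to isolate $\iint|y-\hat y|^2\,\rd\pi = W_2^2(\rom,\rho)$. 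As written, ``integrating in $y$ and using the $L^\infty$ bound'' skips precisely this mechanism, so either supply that argument or, as the paper does, invoke \cite[Theorem 3.3]{choi2020quantified} directly.
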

\begin{proof}
	Let us split the error
	\begin{align}\label{split}
	&|\nabla K\ast\rom (\XG)-\nabla K\ast\rho (X)|^2=2|\nabla K\ast\rom (\XG)-\nabla K\ast\rom (X)|^2+2|\nabla K\ast(\rom-\rho) (X)|^2\notag \\
	\leq &2\norm{\nabla K\ast\rom}_{W^{1,\infty}}|\XG-X|^2+2|\nabla K\ast(\rom-\rho) (X)|^2\,.
	\end{align}
	The following proof will be divided into two cases:
	
	If $K$ satisfies Assumption \ref{asum1} and $\nabla K\in W^{1,1}(B_{2R})$, then 
	\begin{align}
	|\nabla K\ast(\rom-\rho) (X)|&\leq \left|\int_{(X-y)\in B_R}\nabla K(X-y)(\rom-\rho)(y)dy\right|+\left|\int_{(X-y)\in \RR^d\backslash B_R}\nabla K(X-y)(\rom-\rho)(y)dy\right|\notag\\
	&\leq \left|\int_{(X-y)\in B_R}\nabla K(X-y)(\rom-\rho)(y)dy\right|+\norm{\nabla K}_{\mbox{Lip}(\RR^d\backslash B_R)}W_1(\rom,\rho)\,,
	\end{align}
	where we have used the Kantorovich duality \eqref{Kanto} in the second inequality. 	Notice that according to the estimate $(2)$ in \cite[Theorem 3.3]{choi2020quantified} that
	\begin{align}
	&\EE\left[\left|\int_{(X-y)\in B_R}\nabla K(X-y)(\rom-\rho)(y)dy\right|^2\right]\leq C_\infty  \int_{\RR^d}\left|\int_{(x-y)\in B_R}\nabla K(x-y)(\rom-\rho)(y)dy\right|^2dx\notag\\
	\leq& C_\infty \norm{\nabla^2 K}_{L^1(B_{2R})}^2W_2^2(\rom,\rho)\,.
	\end{align}
	Thus we have
	\begin{align}
	\EE[|\nabla K\ast(\rom-\rho) (X)|^2]&\leq 2C_\infty \norm{\nabla^2 K}_{L^1(B_{2R})}^2W_2^2(\rom,\rho)+2 \norm{\nabla K}_{\mbox{Lip}(\RR^d\backslash B_R)}^2W_1^2(\rom,\rho)\notag\\
	&\leq \left(2C_\infty \norm{\nabla^2 K}_{L^1(B_{2R})}^2+2 C_K'^2\right)W_2^2(\rom,\rho)\notag\\
	&\leq \left(2C_\infty \norm{\nabla^2 K}_{L^1(B_{2R})}^2+2 C_K'^2\right)\EE[|\XG-X|^2]\,.
	\end{align}
	Then it follows from \eqref{split} that
	\begin{align}
	&\EE[|\nabla K\ast\rom (\XG)-\nabla K\ast\rho (X)|^2]\notag\\
	\leq &2\norm{\nabla K\ast\rom}_{W^{1,\infty}}\EE[|\XG-X|^2]+\left(4C_\infty \norm{\nabla^2 K}_{L^1(B_{2R})}^2+4 C_K'^2\right)\EE[|\XG-X|^2]\,,
	\end{align}
which proves \eqref{case1}.

If $K$ satisfies Assumption \ref{asum1} and it is given by Newtonian potential, according to $(3)$ in \cite[Theorem 3.3]{choi2020quantified}  one has
\begin{equation}
\norm{\nabla K\ast (\rom-\rho)}_{L^2}^2\leq C_\infty W_2^2(\rom,\rho)\leq C_\infty \EE[|\XG-X|^2]\,.
\end{equation}
Then taking expectation on both sides of \eqref{split} leads to
\begin{align}
&\EE[|\nabla K\ast\rom (\XG)-\nabla K\ast\rho (X)|^2]\leq 2\norm{\nabla K\ast\rom}_{W^{1,\infty}}\EE[|\XG-X|^2]+2\EE[|\nabla K\ast(\rom-\rho) (X)|^2]\notag\\
\leq& 2\norm{\nabla K\ast\rom}_{W^{1,\infty}}\EE[|\XG-X|^2]+2C_\infty \norm{\nabla K\ast (\rom-\rho)}_{L^2}^2\notag\\
\leq &\left(2\norm{\nabla K\ast\rom}_{W^{1,\infty}}+2C_\infty\right) \EE[|\XG-X|^2]\,.
\end{align}
This yields \eqref{case2}.
	\end{proof}
Now we can prove the boundness of the second moment uniformly in $\gamma$. In the following we shall use $C>0$ to denote a generic constant independent of $\gamma$ even though it is different from line to line.
\begin{thm}\label{thm2nd}
	Assume that $\Phi$ satisfy Assumption \ref{asum} and $K$ satisfy Assumption \ref{asum1}--\ref{asum2}. Let $(\XG_t,\VG_t)_{t\in[0,T_p]}$ be the unique solution to \eqref{2MVeq} up to time $T_p$ with the initial data $\mbox{Law}(X_0,V_0)=f_0\in \mc{P}_2\cap W_{x,H}^{1,p}(\RR^{2d})$, $p>\max\{d,q/(q-1)\}$.  It holds that
	\begin{equation}
	\sup_{\gamma\geq 1}\sup_{t\in[0,T_p]}\EE[|X_t^\gamma|^2]<\infty\,.
	\end{equation}
\end{thm}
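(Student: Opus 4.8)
The plan is to rerun Step~1 of the proof of Theorem~\ref{thmtight}, the only change being that the crude bound $\|\nabla K\ast\mu\|_{L^\infty}\le C_K$ used there --- which is no longer available for a singular kernel --- is replaced by the uniform-in-$\gamma$ estimate of Lemma~\ref{lmuniK}. Concretely, I would start from the closed representation \eqref{Fubini},
\[
X_t^\gamma=X_0+\tfrac1\gamma(1-e^{-\gamma^2t})V_0
+\int_0^t(1-e^{-\gamma^2(t-s)})F(X_s^\gamma,\rom_s)\,ds
+\sqrt2\int_0^t(1-e^{-\gamma^2(t-s)})\,dB_s ,
\]
and for $\gamma\ge1$ apply the elementary inequality $(\sum_{i=1}^4 a_i)^2\le 4\sum_{i=1}^4 a_i^2$, H\"older's inequality on the drift term, and It\^o's isometry on the stochastic integral, using $0\le 1-e^{-\gamma^2(t-s)}\le1$. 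Exactly as in the derivation of \eqref{esOX2}--\eqref{uniform-bd} this produces an inequality of the form
\[
\EE[|X_t^\gamma|^2]\le C_0+C_0\int_0^t\EE[|F(X_s^\gamma,\rom_s)|^2]\,ds,\qquad t\in[0,T_p],
\]
with $C_0$ depending only on $\EE[|X_0|^2]$, $\EE[|V_0|^2]$ and $T_p$, and independent of $\gamma$.

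The next step is to bound the forcing uniformly in $\gamma$ and $t\in[0,T_p]$. Writing $F(X_s^\gamma,\rom_s)=-\nabla\Phi(X_s^\gamma)-(\nabla K\ast\rom_s)(X_s^\gamma)$ and using the growth bound \eqref{Phibound} together with Lemma~\ref{lmuniK}, I get
\[
|F(X_s^\gamma,\rom_s)|\le C_\Phi(1+|X_s^\gamma|)+\|\nabla K\ast\rom_s\|_{L^\infty}\le C_\Phi(1+|X_s^\gamma|)+C ,
\]
where $C:=\sup_{\gamma\ge1}\sup_{t\in[0,T_p]}\|\nabla K\ast\rom_t\|_{W^{1,\infty}}<\infty$ is independent of $\gamma$. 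This is the one place where the singularity of $\nabla K$ genuinely intervenes, and it is handled entirely by the inputs already available: Theorem~\ref{thmexist} guarantees $\sup_{\gamma\ge1}\sup_t\|\rom_t\|_{L^\infty}<\infty$, so that $\nabla K\ast\rom_t$ is even well defined pointwise, and Lemma~\ref{lmuniK} upgrades this to the uniform $W^{1,\infty}$ bound. Squaring gives $\EE[|F(X_s^\gamma,\rom_s)|^2]\le C(1+\EE[|X_s^\gamma|^2])$ with $C$ independent of $\gamma$.

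Substituting this into the previous display yields $\EE[|X_t^\gamma|^2]\le C+C\int_0^t\EE[|X_s^\gamma|^2]\,ds$ on $[0,T_p]$, with $C$ depending only on $\EE[|X_0|^2]$, $\EE[|V_0|^2]$, $C_\Phi$, $T_p$ and the constant of Lemma~\ref{lmuniK}, but not on $\gamma$; Gronwall's inequality then gives $\sup_{t\in[0,T_p]}\EE[|X_t^\gamma|^2]\le Ce^{CT_p}$ for every $\gamma\ge1$, which is the claim. I do not anticipate a genuine obstacle: all the analytic work caused by the singular kernel has been pushed into Theorem~\ref{thmexist} and Lemma~\ref{lmuniK}, so what is left is the same Gronwall bootstrap as in the regular case. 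The only points requiring slight care are that one must stay on the local existence horizon $[0,T_p]$ rather than an arbitrary $T$, and that the bound on $\nabla K\ast\rom_t$ must be verified to be uniform in $\gamma$ --- which is precisely the content of Lemma~\ref{lmuniK}.
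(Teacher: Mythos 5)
Your proposal is correct and follows essentially the same route as the paper: repeat Step~1 of the proof of Theorem~\ref{thmtight} to reach $\EE[|X_t^\gamma|^2]\leq C+C\int_0^t\EE[|F(X_s^\gamma,\rho_s^\gamma)|^2]\,ds$, bound the force via \eqref{Phibound} together with the uniform estimate \eqref{uniK} of Lemma~\ref{lmuniK}, and conclude by Gronwall on $[0,T_p]$. No gaps; your added remarks about staying on the horizon $[0,T_p]$ and the $\gamma$-uniformity of the kernel bound match the paper's use of Theorem~\ref{thmexist} and Lemma~\ref{lmuniK}.
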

\begin{proof}
	It follows from the same argument as in the Step 1 of Theorem \ref{thmtight} that
	\begin{align}
	\EE[|X_t^\gamma|^2]\leq C+C\int_0^t\EE[|F(X_s^\gamma,\rho_s^\gamma )|^2]ds\,,
	\end{align}
	where $C$ is independent of $\gamma$.
	It follows from \eqref{Phibound} and \eqref{uniK} that
	\begin{align}\label{EFbound1}
	\EE[|F(X_s^\gamma,\rho_s^\gamma )|^2]&=\EE[|\nabla\Phi(X_s^\gamma)+\nabla K\ast \rom_s(X_s^\gamma)|^2]   \leq 2\EE[|\nabla\Phi(X_s^\gamma)|^2]+C\notag\\
	&\leq 2\EE[|C_\Phi(1+|X_s^\gamma|)|^2]+C\leq C+C\EE[|X_s^\gamma|^2]
	\,,
	\end{align}	
	which leads to
		\begin{align}
	\EE[|X_t^\gamma|^2]\leq C+C\int_0^t\EE[|X_s^\gamma|^2]ds\,.
	\end{align}
	Applying Gronwall's inequality completes the proof.
	\end{proof}
Using the theorem above we obtain  the overdamped limit for the singular case:
\begin{thm}
Under the same assumptions as in Theorem \ref{thm2nd}. Let $(\XG_t,\VG_t)_{t\in[0,T_p]}$ be the unique solution to \eqref{2MVeq} up to time $T_p$, and $(X_t)_{t\in[0,T_p]}$ be the unique solution to \eqref{1MVeq}.
It holds that
\begin{equation}
\sup_{t\in[0,T_p]}\EE[|\XG_t-X_t|^2]\leq \frac{C}{\gamma^2}e^{CT}\,,
\end{equation}
where $C>0$ is independent of $\gamma\geq 1$.
\end{thm}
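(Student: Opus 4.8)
The plan is to repeat the comparison argument used to prove Theorem \ref{thmlimit}, making only two substitutions: the global Lipschitz bound \eqref{lipbound} is replaced by the localized estimate of Lemma \ref{lmlipK} for the interaction force together with the Lipschitz bound on $\nabla\Phi$ from Assumption \ref{asum}, and the uniform-in-$\gamma$ second moment \eqref{uniform-bd} is replaced by Theorem \ref{thm2nd}. Starting from the mild formulation \eqref{Fubini} for $\XG_t$ and the identity \eqref{1MVeq} for $X_t$ and subtracting, I would write
\begin{align*}
\XG_t-X_t&=\tfrac1\gamma(1-e^{-\gamma^2t})V_0+\int_0^t(1-e^{-\gamma^2(t-s)})\bigl(F(\XG_s,\rom_s)-F(X_s,\rho_s)\bigr)\,ds\\
&\qquad-\int_0^te^{-\gamma^2(t-s)}F(X_s,\rho_s)\,ds-\sqrt2\int_0^te^{-\gamma^2(t-s)}\,dB_s\,,
\end{align*}
and estimate the four contributions separately.

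For the first term, $\EE[|\tfrac1\gamma(1-e^{-\gamma^2t})V_0|^2]\le\gamma^{-2}\EE[|V_0|^2]$. For the last one, It\^o's isometry gives $\EE[|\int_0^te^{-\gamma^2(t-s)}dB_s|^2]=\int_0^te^{-2\gamma^2(t-s)}ds\le\tfrac1{2\gamma^2}$. For the third, Cauchy--Schwarz together with $\int_0^te^{-\gamma^2(t-s)}ds\le\gamma^{-2}$ and the bound $\sup_{t\le T_p}\EE[|F(X_t,\rho_t)|^2]<\infty$ produces a term of order $\gamma^{-2}$; this last bound follows from $\sup_{t\le T_p}\norm{\nabla K\ast\rho_t}_{L^\infty}<\infty$ and finiteness of the second moment of $X_t$ on $[0,T_p]$, exactly as in \eqref{EFbound1}. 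For the force-difference term I split $F(x,\mu)=-\nabla\Phi(x)-\nabla K\ast\mu(x)$: the external part is controlled by $C_\Phi|\XG_s-X_s|$ via \eqref{Phibound}, and the interaction part by Lemma \ref{lmlipK}, which applies because $\rom_t,\rho_t\in L^\infty$ uniformly in $\gamma$ (by \eqref{rhoes} and the regularity of $\rho_t$) and $\norm{\nabla K\ast\rom_t}_{W^{1,\infty}}$ is uniformly bounded by Lemma \ref{lmuniK}; hence $\EE[|F(\XG_s,\rom_s)-F(X_s,\rho_s)|^2]\le C\,\EE[|\XG_s-X_s|^2]$ with $C$ independent of $\gamma\ge1$. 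Collecting the four estimates and using $(\sum_{i=1}^na_i)^2\le n\sum_{i=1}^na_i^2$ gives
\begin{equation*}
\EE[|\XG_t-X_t|^2]\le\frac{C}{\gamma^2}+C\int_0^t\EE[|\XG_s-X_s|^2]\,ds,\qquad t\in[0,T_p],
\end{equation*}
and Gr\"onwall's inequality yields $\sup_{t\in[0,T_p]}\EE[|\XG_t-X_t|^2]\le\tfrac{C}{\gamma^2}e^{CT_p}\le\tfrac{C}{\gamma^2}e^{CT}$, as claimed.

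The step I expect to be the main obstacle is supplying the regularity of the limiting density $\rho_t=\mathrm{Law}(X_t)$ that Lemma \ref{lmlipK} (and the bound on $\EE[|F(X_t,\rho_t)|^2]$) demand, namely $\rho_t\in L^\infty$ and a finite second moment uniformly on $[0,T_p]$: unlike $\rom_t$, which inherits \eqref{rhoes} directly from Theorem \ref{thmexist}, these are properties of the solution of the aggregation--diffusion equation \eqref{adeq}. One clean way to obtain them is to first rerun the tightness argument of Theorem \ref{thmtight} --- it goes through verbatim once Theorem \ref{thm2nd} and Lemma \ref{lmuniK} replace \eqref{uniform-bd} and \eqref{EFbound} --- then identify the weak limit of $\{\XG\}$ as the unique solution of \eqref{1MVeq} as in Theorem \ref{thmlimit}, and note that the uniform $L^\infty$ bound on $\rom_t$ and the uniform second moment pass to $\rho_t$ by weak-$\ast$ lower semicontinuity and Fatou's lemma. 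With that in place, the rest is a routine repetition of the regular-case computation.
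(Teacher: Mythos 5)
Your proposal is correct and follows essentially the same route as the paper: subtract the mild formulation \eqref{Fubini} from \eqref{1MVeq}, bound the extra terms by $C/\gamma^2$, control the force difference via \eqref{Phibound} together with Lemmas \ref{lmuniK} and \ref{lmlipK} (with Theorem \ref{thm2nd} supplying the uniform second moment), and conclude by Gr\"onwall. In fact you are more careful than the paper on one point: the $L^\infty$ and moment regularity of the limiting density $\rho_t$ needed to invoke Lemma \ref{lmlipK} is used implicitly in the paper's proof, and your suggested argument (rerunning the tightness/identification step and passing the uniform bounds to the limit) is a reasonable way to justify it.
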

\begin{proof}
	It follows from the same argument as in Theorem \ref{thmlimit} that
	\begin{align}
	\EE[|\XG_t-X_t|^2]\leq C\int_0^t\EE[|F(\XG_s,\rom_s)-F(X_s,\rho_s)|^2]ds+C\frac{1}{\gamma^2}\,.
	\end{align}
	Applying assumption \eqref{Phibound} on $\Phi$, Lemma \ref{lmuniK} and Lemma \ref{lmlipK} one has
\begin{align}
\EE[|F(\XG_s,\rom_s)-F(X_s,\rho_s)|^2]&\leq 2\EE[|\nabla\Phi(\XG_s)-\nabla\Phi (X_s)|^2]+2\EE[|\nabla K\ast \rom_s(\XG_s)-\nabla K\ast \rho_s(X_s)|^2]\notag\\
&\leq C\EE[|\XG_s-X_s|^2]\,,
\end{align}
where $C$ is independent of $\gamma$. This implies
	\begin{align}
\EE[|\XG_t-X_t|^2]\leq C\int_0^t\EE[|\XG_s-X_s|^2]ds+C\frac{1}{\gamma^2}\,.
\end{align}
Applying Gronwall's inequality finishes the proof.
	\end{proof}

\bibliographystyle{amsxport}
\bibliography{overdamp}


\end{document}